\numberwithin{equation}{section}
\theoremstyle{definition} % thm, prop, cor, lemma and examples NOT in italic
\newtheorem{thm}{Theorem}[section] 
\newtheorem{lemma}[thm]{Lemma}
\newtheorem{prop}[thm]{Proposition}
\newtheorem{cor}[thm]{Corollary}
\newtheorem{ex}[thm]{Example}
\newtheorem{defn}[thm]{Definition}
\newtheorem{rmk}[thm]{Remark}
\newtheorem{note}[thm]{Notation}
\newtheorem*{thm*}{Theorem} % Unnumbered theorem (useful for introduction)
\newcommand{\mathsc}[1]{{\normalfont\textsc{#1}}}
\DeclareMathOperator{\isom}{isom}
\DeclareMathOperator{\commutassoc}{coas} %% symbol for commutivity composed with associativity, i.e. (a+b)+(c+d) -> (a+c)+(b+d)
\DeclareMathOperator{\Aut}{Aut}\newcommand{\autom}{\Aut}
\DeclareMathOperator{\Hom}{Hom}
\DeclareMathOperator{\objects}{ob}
\DeclareMathOperator{\opposite}{op}
\DeclareMathOperator{\identity}{id}\newcommand{\id}{\identity}
\DeclareMathOperator{\barfunc}{bar}
\DeclareMathOperator{\dfunc}{d}
\newcommand{\Dbar}{\delta_{D}}
\DeclareMathOperator{\Det}{\mathsc{Det}}
\DeclareMathOperator{\Grpd}{\mathsc{Grpd}}
\DeclareMathOperator{\Pic}{\mathsc{Pic}}
\DeclareMathOperator{\TrCat}{\mathsc{TrCat}}
\newcommand{\num}[1]{\mathbb{#1}}
\newcommand{\ZZ}{\num{Z}}
\newcommand{\leftmapsto}{\mathrel{\reflectbox{\ensuremath{\mapsto}}}}
\newcommand{\picadd}{+} %% Addition in Picard Groupoids
\newcommand{\picmult}{\odot} %% Multiplication in V(T)
\newcommand{\action}{\cdot} %% Action in \pi_1
\DeclarePairedDelimiter\GenericDetInput{\lbrack}{\rbrack} %% Uses mathtools
\newcommand{\GenericDet}{\GenericDetInput{-}} %% Generic det
\begin{document}

\title{On Multi-Determinant Functors for Triangulated Categories}
\author{%
  Ettore Aldrovandi \\
  {\small \url{aldrovandi@math.fsu.edu}}
  \and 
  Cynthia Lester\footnote{Current address: The College of Wooster 1189 Beall Ave, Wooster, OH 44691} \\
  {\small \url{clester@wooster.edu}}
  \and {\small Department of Mathematics, Florida State University, Tallahassee, FL 32306-4510}}
\date{}
\maketitle

\begin{abstract}
We extend Deligne's notion of determinant functor to tensor triangulated categories.
Specifically, to account for the multiexact structure of the tensor, we define a determinant functor on the 2-multicategory of triangulated categories and we provide a multicategorical version of the universal determinant functor for triangulated categories whose multiexactness properties are conveniently captured by a certain complex modeled by cubical shapes, which we introduce along the way. We then show that for a tensor triangulated category whose tensor admits a Verdier structure the resulting determinant functor takes values in a categorical ring.
\end{abstract}

\setcounter{tocdepth}{2}
\tableofcontents

\section{Introduction}

In \cite{deligne1987determinant}, Deligne defined a determinant functor $\det\colon \mathcal{E}\to\mathcal{P}$ on an exact category $\mathcal{E}$ to a Picard groupoid $\mathcal{P}$.
This functor, defined only on the isomorphism classes of $\mathcal{E}$, $\det\colon\text{isom}(\mathcal{E})\to\mathcal{P}$ is subject to the constraint that every short exact sequence $X\to Y\to Z$ in $\mathcal{E}$ corresponds a natural isomorphism $\det(Z)\otimes\det(X)\cong \det(Y)$ in $\mathcal{P}$.  Furthermore, Deligne constructed a Picard groupoid $V(\mathcal{E})$, whose objects are called \emph{virtual objects,} and a \emph{universal} determinant functor $\det\colon \mathcal{E}\to V(\mathcal{E})$ such that $\pi_0(V(\mathcal{E}))$ is $K_0(\mathcal{E})$ and $\pi_1(V(\mathcal{E}))$ is $K_1(\mathcal{E})$, where the $K_i$ are the $K$-theory groups of $\mathcal{E}$. To say $\det\colon \mathcal{E} \to V(\mathcal{E})$ is universal means that every determinant functor $\det'\colon \mathcal{E} \to \mathcal{P}$ factors uniquely, in a 2-categorical sense, through $\det\colon \mathcal{E} \to V(\mathcal{E})$ by way of a morphism $V(\mathcal{E}) \to \mathcal{P}$ of Picard groupoids.

In \cite{breuning2011determinant}, Breuning extended Deligne's definition of determinant functors, virtual objects, and universal determinant functors to (small) triangulated categories. Since triangulated categories do not have an immediately defined $K$-theory spectrum (say, in the same way as exact or Waldhausen categories do), the entirety of Deligne's results cannot be extended to triangulated categories in a straightforward way.  However, we can still discuss the universal determinant functor $\det\colon \mathcal{T} \to V(\mathcal{T})$ for a triangulated category $\mathcal{T}$, and compute $\pi_0(V(\mathcal{T}))$ and $\pi_1(V(\mathcal{T}))$.  Moreover, $\pi_0(V(\mathcal{T}))$ is the Grothendieck group for $\mathcal{T}$, i.e.\ it is what $K_0(\mathcal{T})$ should be in any reasonable definition of $K$-theory for $\mathcal{T}$. Therefore, one definition for $K_1(\mathcal{T})$ is $\pi_1(V(\mathcal{T}))$.  Breuning's results in \cite{breuning2011determinant} show that this leads to an interesting and useful theory. As remarked there, there is a similarity with the simplicial ideas in Neeman's constructions (see \cite{neeman2005K-triangulated}). Breuning's techniques were used by Muro and Tonks \cite{muro2007type}, and, later, by Muro, Tonks and Witte \cite{muro2010determinant} to obtain explicit models for the universal Picard groupoid $V(\mathcal{C})$, where $\mathcal{C}$ is either a Waldhausen or a triangulated category.

This paper grew out of our consideration of tensor triangulated categories, namely triangulated categories equipped with a (symmetric) monoidal structure compatible with the triangulation \cite{balmer2010ttg}. The compatibility dictates that the bifunctor underlying the tensor structure be exact, i.e.\ that it preserve the distinguished triangles, in each of the two variables while the other is kept fixed (plus some additional conditions, which we shall ignore, for the time being, for the sake of simplicity). 

If $\mathcal{T}$ is a tensor triangulated category, we ask what additional properties the Picard groupoid $V(\mathcal{T})$ and the universal determinant functor $\det \colon \mathrm{isom}(\mathcal{T}) \to V(\mathcal{T})$  must have so that they are compatible with the tensor triangulated structure. (A similar question was considered for Waldhausen categories by Muro and Tonks \cite{muro2007type}.) We show that if $\mathcal{T}$ is a tensor triangulated category, then $V(\mathcal{T})$ can be made into a categorical ring. In fact, it turns out categorical rings are the natural recipients for determinant functors on tensor triangulated categories.

This kind of question is most adequately discussed using (colored) operads, or in other parlance, multicategories. This is because ring-like structures, such as those carried by a tensor triangulated category or a categorical ring, can be viewed as multiexact functors, that is, $n$-to-one functors which are exact, in the appropriate way, in each variable. More specifically, multiexact functors of triangulated categories are just $n$-functors
\begin{equation*}
  F\colon \mathcal{T}_1\times \dots \times \mathcal{T}_n\to \mathcal{T}
\end{equation*}
which preserve distinguished triangles in each variable (again, plus a number of  compatibility conditions). Then triangulated categories form a groupoid enriched multicategory where the categories of $n$-fold morphisms are precisely given by the multiexact functors and their natural isomorphisms \cite{schnurer2018sixop}.\footnote{This is slightly more restrictive than \cite{schnurer2018sixop}, which considers enrichments over linear categories.} Similarly, Picard groupoids can be made into a groupoid enriched multicategory by adopting an analogous definition of multiexact functor (see \cite{ducrot2005cube,aldrovandi2015biex}). Ring-like objects in both of these are the monoid objects in the multicategorical sense.

Thus we extend the definition of a determinant functor to multicategories, and we call the result a \emph{multi-determinant functor.}  The definition of multi-determinant functor can be given in two ways: as a clear extension of Breuning's determinant functor, and as a functor between ``complexes of cubes.''  The former is the application of the idea of multiexactness to generalize Breuning's axioms of the determinant functor: we require that the standard axioms hold in each variable, plus certain compatibility conditions for the additivity data when two or more variables are involved. The latter are (truncated) complexes with a cubical shape modeled after Mac Lane's $Q$-construction \cite{maclane1956ann}. Like the original definition, they encode very precisely all the data expressing the multiexactness condition. (Additional inspirations for the definition of the cubical complexes come from the cubes and the $Q$-complexes used, respectively, in refs.\ \cite{burgos2000arith,mccarthy1996chain}.)

Moreover, our multi-determinant functor lends itself to an extension of the 2-categorical properties of the universal determinant functor. Specifically, the classical definition of the universal determinant functor $\det\colon \mathcal{T}\to V(\mathcal{T})$ is equivalent to saying that for any triangulated category $\mathcal{T}$ and Picard groupoid $\mathcal{P}$ there is an equivalence of groupoids
\begin{equation*}
  \Det(\mathcal{T},\mathcal{P}) \simeq \Pic(V(\mathcal{T}),\mathcal{P})\,.
\end{equation*}
In other words, for a given $\mathcal{T}$, the universal determinant co-represents the 2-functor 
\begin{equation*}
  \mathcal{P}\rightsquigarrow \Det(\mathcal{T},\mathcal{P})\,.
\end{equation*}
We extend this equivalence, with the usual functorial properties, to the multicategorical situation in section
\ref{section_universal_multideterminants}. Specifically, we show 
\begin{thm*}[Theorem \ref{main_theorem_about_multi-determinants} and Corollary \ref{categorical_equiv_of_DET_and_PIC} without technical assumptions]
  Let $\TrCat$ be the (groupoid enriched) multicategory of triangulated categories and $\Pic$ that of Picard groupoids.\footnote{$\Pic$, both as a category and as a multicategory, is enriched over itself; in this statement we consider it as enriched over groupoids.} Denote by $\Pic^\mathnormal{\times}$ the corresponding (groupoid enriched) monoidal category whose objects are strings $(\mathcal{P}_0,\dots,\mathcal{P}_n)$. There is an equivalence of groupoids
  \begin{equation*}
    \Det(\mathcal{T}_0,\dots,\mathcal{T}_n;-) \simeq \Pic^\mathnormal{\times}(V(\mathcal{T}_0),\dots,V(\mathcal{T}_n);-)\,,
  \end{equation*}
  where the left-hand side consists of multi-determinants and is considered as a 2-functor from $\Pic^\mathnormal{\times}$ to groupoids. In other words, the groupoid of multi-determinants is co-represented by the object $(\mathcal{P}_0,\dots,\mathcal{P}_n)$ of $\Pic^\mathnormal{\times}$.
\end{thm*}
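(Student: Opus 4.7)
The plan is to establish the equivalence by combining the classical single-variable universal property of Breuning with an induction on the arity $n$. In the straightforward direction, a multi-linear morphism $F\colon V(\mathcal{T}_0)\times\cdots\times V(\mathcal{T}_n)\to\mathcal{Q}$ of Picard groupoids pulls back along the tuple of universal determinants to yield a multi-determinant $F\circ(\det_0,\dots,\det_n)\colon(\mathcal{T}_0,\dots,\mathcal{T}_n)\to\mathcal{Q}$; the multi-determinant axioms in each slot follow from the in-slot Breuning axioms for $\det_i$ combined with the multi-linearity of $F$, and the cross-variable compatibility conditions reduce to the multi-linear coherence of $F$.

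For the inverse direction I would argue by induction on $n$, the base case $n=0$ being exactly Breuning's theorem recalled in the introduction. Given a multi-determinant $D\colon(\mathcal{T}_0,\dots,\mathcal{T}_n)\to\mathcal{Q}$, freezing a tuple of objects $(X_1,\dots,X_n)$ in the last $n$ slots turns $D(-,X_1,\dots,X_n)$ into an ordinary determinant functor $\mathcal{T}_0\to\mathcal{Q}$; by Breuning this factors uniquely as a Picard morphism $\overline{D}_{X_1,\dots,X_n}\colon V(\mathcal{T}_0)\to\mathcal{Q}$. As the frozen tuple varies, these factorizations ought to assemble into an $n$-variable multi-determinant $\widetilde{D}\colon(\mathcal{T}_1,\dots,\mathcal{T}_n)\to\Pic(V(\mathcal{T}_0),\mathcal{Q})$, whose target is the Picard groupoid of Picard-morphisms into $\mathcal{Q}$ with its pointwise structure. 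The inductive hypothesis then supplies a multi-linear morphism $V(\mathcal{T}_1)\times\cdots\times V(\mathcal{T}_n)\to\Pic(V(\mathcal{T}_0),\mathcal{Q})$, which by the closed/curried structure of $\Pic^\times$ unpacks to the desired $(n{+}1)$-linear morphism $V(\mathcal{T}_0)\times\cdots\times V(\mathcal{T}_n)\to\mathcal{Q}$.

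The principal obstacle is the inductive step: verifying that the pointwise factorizations $\overline{D}_{X_1,\dots,X_n}$ really do assemble into a multi-determinant of Picard groupoids in the remaining $n$ slots. Concretely, each distinguished triangle in $\mathcal{T}_i$ with $i>0$ must produce, through the cross-variable additivity data of $D$, a natural isomorphism between the resulting Picard morphisms $V(\mathcal{T}_0)\to\mathcal{Q}$, and these must satisfy pentagon-type coherences when two or more of the slots $1,\dots,n$ are exercised simultaneously. This is exactly where the cubical complex of $D$ introduced earlier in the paper becomes indispensable: its higher cells encode precisely the compatibility between additivity in different slots, so that every compatibility cell required of $\widetilde{D}$ can be read off from the corresponding cube of $D$, while Breuning's uniqueness clause identifies the induced $2$-cells canonically. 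Once $\widetilde{D}$ is constructed and the inductive factorization produced, essential surjectivity and full faithfulness of the equivalence follow by iterating Breuning's single-variable uniqueness $(n{+}1)$ times; naturality in $\mathcal{Q}$ is evident from the construction, yielding the claimed co-representability and hence the corollary identifying $\Det$ and $\Pic^\times$ as 2-functors.
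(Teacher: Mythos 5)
Your proposal is essentially the paper's own proof: induction on the arity, currying one variable out, invoking the closed structure of $\Pic$ to uncurry at the end, and using the uniqueness clause of the lower-arity universal property to transfer the additivity data across the factorization — the ``principal obstacle'' you identify (that the pointwise factorizations assemble into a determinant functor in the remaining slots) is exactly the assembly step the paper handles via Propositions \ref{induced morphisms respect addition}--\ref{x mapsto D_x is a determinant functor} and Corollary \ref{x mapsto overline{D_x} is a determinant functor}. The only (harmless) difference is the order of application: the paper applies the inductive hypothesis to each $D_x$ for frozen $x\in\mathcal{T}_0$ and then the base case to the one-variable determinant $x\mapsto\overline{D_x}$ valued in $\Pic(V(\mathcal{T}_1),\dots,V(\mathcal{T}_n);\mathcal{P})$, whereas you apply the base case pointwise in the frozen tail and then the inductive hypothesis to the resulting $n$-ary multi-determinant valued in $\Pic(V(\mathcal{T}_0),\mathcal{Q})$; either order requires the same uniqueness-driven verification, though the paper's choice reduces the assembly step to checking only the one-variable determinant axioms rather than the full cross-variable ones.
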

It turns out that to fully account for the functoriality mentioned in the above statement it is necessary to impose some mild restrictions on the definition of $\TrCat$: that the (multi)morphisms create and preserve ``Verdier structures" on certain $3\times 3$ diagrams (see \cite{bbd1981pervers,keller2002connection,may2001ttt}, and Definitions \ref{Verdier_Structure} and
\ref{def_multifunctor_verdier}). The corresponding multicategory $\TrCat_{\mathcal{V}} \subset \TrCat$ is introduced in section \ref{sec:funct_equiv}, and we use it in section \ref{sec:det_ttt} to return to the question of the structure inherited by the Picard groupoid $V(\mathcal{T})$ when $\mathcal{T}$ is tensor triangulated. For these ``nice'' tensor triangulated categories $\mathcal{T}$, the classic virtual object $V(\mathcal{T})$ is a categorical ring (see Theorem
\ref{cat_ring_for_tensor_triangulated}), with the straightforward consequence that $K_0(\mathcal{T})$ is a ring, and $K_1(\mathcal{T})$ is a $K_0(\mathcal{T})$-bimodule.

This paper is organized as follows. We recall some background notions concerning determinant functors in Section \ref{sec:background}.  Our main development of multi-determinant functors is carried out in Section \ref{sec:multi-determinant-funct-triangulated}; the generalization of Breuning's determinant functor in Section \ref{sec:definition_multideterminant}, and the alternative one based on the cubical complex in Section \ref{sec:alternative_definition}. The existence of universal multi-determinant functors is shown in Section
\ref{section_universal_multideterminants}. The short last section (Section \ref{sec:det_ttt}) contains the application to the tensor structure and the proof that the universal determinant lands in a categorical ring.

%%%%%%%%%%%%%%%%%%%%
\section{Background}
\label{sec:background}
%%%%%%%%%%%%%%%%%%%%

We recall a few basic notions about Picard groupoids, and then review the classic definitions related to determinant functors and fix some notation. Then we discuss some specific categories; fixing some notation and vocabulary as well as stating some related miscellaneous facts.

\subsection{Picard Groupoids}
\label{sec:picard_groupoids}
%%%%%%%%%%%%%%%%%%%%%%%%%%%%%%%%

By a Picard groupoid, also known as a categorical group, \cite{MR1250465}, we mean a groupoid $\mathcal{P}$ equipped with a symmetric monoidal structure, denoted by $\picadd \colon \mathcal{P} \times \mathcal{P} \to \mathcal{P}$, which is \emph{group-like} in the sense that for every object $x$ of $\mathcal{P}$ the functor $x \picadd (\cdot) \colon \mathcal{P}\to \mathcal{P}$ is a natural equivalence. The monoidal structure is always assumed to be unital, and we denote a chosen unit by $0$. The symmetry consists of functorial isomorphisms $c_{a,b}\colon a + b \to b + a$ (this is the braided part) such that $c_{b,a}\circ c_{a,b}=\id_{a+b}$.

We shall denote a Picard groupoid by $(\mathcal{P},\picadd,0)$, or simply by $\mathcal{P}$, suppressing all the structure in the notation. Despite the shorthand, the monoidal structure and the unit are \emph{not} assumed to be strict.

For any Picard groupoid $\mathcal{P}$ we let $\pi_0(\mathcal{P})$ be the abelian group of isomorphism classes of objects, and $\pi_1(\mathcal{P}) = \Aut_{\mathcal{P}}(0)$ (also an abelian group). It is well known that  a Picard groupoid is classified by the quadratic map $\pi_0 (\mathcal{P})\to \pi_1(\mathcal{P})\otimes \ZZ/2$—the $k$-invariant—given by $a \mapsto c_{a,a}$
\cite{MR1250465,muro2010determinant}. (The symmetry implies that $c_{a,a}$ must have order 2.)  Here, following ref.\ \cite{MR2369166}—and also an older remark by Mac~Lane \cite{maclane1956ann}—we prefer to use the morphism
\begin{equation*}
  \commutassoc_{a,b,c,d} \colon (a + b) + (c + d) \longrightarrow (a + c) + (b + d)\,,
\end{equation*}
which includes both the commutativity and associativity isomorphisms. The coherence condition for $\commutassoc$ includes and is equivalent to both the pentagon and hexagon diagrams. This mirrors the fact that the stable cohomology of $\pi_0$ with coefficient in $\pi_1$—where the $k$-invariant lives—can be computed by the Eilenberg-Mac\~Lane cubical complex. The morphism $\commutassoc$ defines the corresponding class in the latter. (See section \ref{sec:multi-determinant-funct-triangulated} below.)

\subsection{Determinant Functors}
\label{sec:determinat_functors}
%%%%%%%%%%%%%%%%%%%%%%%%%%%%%%%%%

In this section, we review the basics of determinant functors. For additional information, we refer the reader to \cite{breuning2011determinant} and \cite{muro2010determinant}.

\begin{defn}\label{det_functor}
For a triangulated category $\mathcal{T}$ and a Picard groupoid $\mathcal{P}$, a \emph{determinant functor} from $\mathcal{T}$ to $\mathcal{P}$ consists of a functor
\begin{equation*}
  \GenericDet\colon \isom(\mathcal{T})\to\mathcal{P}
\end{equation*}
from the category of isomorphisms of $\mathcal{T}$ to $\mathcal{P}$, together with \emph{additivity data:} for any distinguished triangle
\begin{equation*}
  \Delta\colon x\to y\to z\to \Sigma x
\end{equation*}
there is assigned a morphism
\begin{equation*}
  \GenericDetInput{\Delta}\colon \GenericDetInput{z}+\GenericDetInput{x}\to\GenericDetInput{y}
\end{equation*}
in $\mathcal{P}$, which is natural with respect to isomorphisms of distinguished triangles. The additivity data must satisfy the following axioms:
\begin{enumerate}
\item \emph{(Commutativity)} For any $x,y\in\mathcal{T}$, and triangles
\begin{align*}
  \Delta_1 &\colon x\to x\oplus y\to y\to \Sigma x\\
  \Delta_2 &\colon y\to x\oplus y\to x\to \Sigma y.
\end{align*}
there is a commutative diagram
\begin{equation*}
  \begin{tikzcd}
    \GenericDetInput{x}+\GenericDetInput{y}
    \arrow{rr}{\cong}[swap]{\text{commutativity}} 
    \arrow{rd}[swap]{\GenericDetInput{\Delta_2}} && 
    \GenericDetInput{y}+\GenericDetInput{x} 
    \arrow[ld,"\GenericDetInput{\Delta_1}"]\\
    & \GenericDetInput{x\oplus y} &
  \end{tikzcd}
\end{equation*}
\item \emph{(Associativity)} For any octahedron:
\begin{equation*}
  \begin{tikzcd}
    &[-2em] & \Delta_{3} & \Delta_{4} \\[-3ex]
    \Delta_{1} & x \arrow[r] \arrow[d, equal] & y \arrow[r] \arrow[d] & 
                z' \arrow[d] \arrow[r] & \Sigma x \arrow[d, equal]\\
    \Delta_{2} & x \arrow[r] & z \arrow[r] \arrow[d] & 
                y' \arrow[d] \arrow[r] & \Sigma x \\
    & & x' \arrow[r, equal] \arrow[d] & x'\arrow[d] \\
    & & \Sigma y \arrow[r] & \Sigma z'
  \end{tikzcd}
\end{equation*}
the following diagram commutes
\begin{equation*}
  \begin{tikzcd}[row sep=large]
    (\GenericDetInput{x'}+\GenericDetInput{z'})+\GenericDetInput{x} \arrow{rr}{\cong}[swap]{\text{associativity}} \arrow{d}[swap]{\GenericDetInput{\Delta_4}+\id} && \GenericDetInput{x'}+(\GenericDetInput{z'}+\GenericDetInput{x}) \arrow[d,"\id+\GenericDetInput{\Delta_1}"]\\
    \GenericDetInput{y'}+\GenericDetInput{x}\arrow{r}[swap]{\GenericDetInput{\Delta_2}} &\GenericDetInput{z} & \GenericDetInput{x'}+\GenericDetInput{y}\arrow[l,"\GenericDetInput{\Delta_3}"]
  \end{tikzcd}
\end{equation*}
\end{enumerate}
\end{defn}

By abuse of language, we denote a determinant simply as $\GenericDet \colon \mathcal{T} \to \mathcal{P}$, leaving out the specification of the subcategory of isomorphisms and the additivity data.

\begin{defn}\label{universal_det_definition}
A determinant functor $\GenericDet\colon \mathcal{T}\to \mathcal{P}$ is \emph{universal} if 
for any Picard groupoid $\mathcal{P}'$ and determinant $D\colon\mathcal{T}\to\mathcal{P}'$, there exists a pair $(f,\alpha)$ comprised of a morphism of Picard groupoids $f\colon \mathcal{P}\to\mathcal{P}'$ and a natural transformation $\alpha\colon f\circ\GenericDet \Rightarrow D$ such that for any distinguished triangle $\Delta\colon x\to y\to z\to\Sigma x$ in $\mathcal{T}$ the diagram
\begin{equation*}
  \begin{tikzcd}
    f(\GenericDetInput{z})+ f(\GenericDetInput{x}) \arrow[r] \arrow{d}[swap]{\alpha_z+\alpha_x} & f(\GenericDetInput{z} + \GenericDetInput{x}) \arrow[r,"f(\GenericDetInput{\Delta})"] & f(\GenericDetInput{y}) \arrow[d,"\alpha_y"]\\
    D(z) + D(x) \arrow{rr}[swap]{D(\Delta)} && D(y)
  \end{tikzcd}
\end{equation*}
commutes. Moreover, the pair $(f,\alpha)$ is unique in the sense that if $(f',\alpha')$ is another such pair, then there exists a unique natural transformation $\beta\colon f\Rightarrow f'$ with $\alpha'\circ(\beta\ast\GenericDet) = \alpha$.
\end{defn}

\begin{note}
For any triangulated category $\mathcal{T}$, we use $\det\colon\mathcal{T}\to V(\mathcal{T})$ to represent the universal determinant functor. Additionally, we call the objects of $V(\mathcal{T})$  \emph{virtual objects.} Such universal determinant functors are known to exist for small triangulated categories (for example, see \cite{muro2010determinant}).
\end{note}

\begin{defn}\label{det_morph}
Let $\mathcal{T}$ be a triangulated category and $\mathcal{P}$ be a Picard groupoid.
For two determinant functors $d_i\colon\mathcal{T}\to\mathcal{P}$, a \emph{morphism between the determinant functors} $\theta\colon d_1\Rightarrow d_2$ is a natural transformation of the functors $d_i\colon\isom(\mathcal{T})\to\mathcal{P}$ that is compatible with the additivity data; i.e. a natural transformation such that for any distinguished triangle $\Delta\colon x\to y\to z\to\Sigma x$ the diagram below commutes.
\begin{equation*}
  \begin{tikzcd}[column sep=large]
    d_1(z)+d_1(x)\arrow[r,"d_1(\Delta)"] \arrow{d}[swap]{\theta_z+\theta_x} & 
    d_1(y) \arrow[d,"\theta_y"]\\
    d_2(z)+d_2(x) \arrow{r}[swap]{d_2(\Delta)} & d_2(y)
  \end{tikzcd}
\end{equation*}
\end{defn}

\subsection{Special Categories}
\label{sec:special_categories}

In this section we briefly mention some notation and relevant facts concerning the categories used in the next sections.  In general, we assume all triangulated categories are small; this is only required to ensure the existence of the virtual objects (see \cite{breuning2011determinant} and \cite{muro2010determinant}).
\begin{note}
  For any category $\mathcal{C}$, $\mathcal{C}(x;y)$ will stand as shorthand for $\Hom_{\mathcal{C}}(x;y)$. Similarly, if $\mathcal{C}$ is a multicategory, we use $\mathcal{C}(x_1,\dots,x_n;y)$ for the multi-morphisms. (Note that $0$-ary morphisms, i.e.\ morphisms with no inputs, are possible.)
\end{note}

We begin by clarifying the notion of multiexact functor for both Picard groupoids and triangulated categories.

\begin{defn}
  \label{multiexact_picard}
  For Picard groupoids $\mathcal{P}$, $\mathcal{Q}$ and $\mathcal{R}$, we will call the functor $F\colon \mathcal{P}\times\mathcal{Q}\to\mathcal{R}$ \emph{biexact} if it is monoidal in each variable and such that for all objects $a,b$ of $\mathcal{P}$ and $c,d$ of $\mathcal{Q}$ the diagram
  \begin{equation*}
    \begin{tikzcd}[sep=small]
      & F(a,c+d)+F(b,c+d) \arrow{rd} &\\
      && (F(a,c)+F(a,d))+(F(b,c)+F(b,d)) \arrow{dd}{\cong}[swap]{\commutassoc}\\
      F(a+b,c+d) \arrow{ruu} \arrow{rdd} &&\\
      && (F(a,c)+F(b,c))+(F(a,d)+F(b,d))\\
      & F(a+b,c)+F(a+b,d) \arrow{ru} &
    \end{tikzcd}
  \end{equation*}
  commutes. Similarly, for Picard groupoids $\mathcal{P}_1$, $\mathcal{P}_2$, $\dots$, $\mathcal{P}_n$ and $\mathcal{P}$, the functor $F\colon \mathcal{P}_1\times\dots\times\mathcal{P}_n\to\mathcal{P}$ is called \emph{multiexact} if it is a monoidal functor in each variable and satisfies the above diagram for any pair of variables.
\end{defn}

\begin{rmk}
  The coherence condition for the $\commutassoc$ guarantees that no further conditions are necessary in the definition of a multiexact functor with $n\geq 3$ variables in Definition \ref{multiexact_picard} above.
\end{rmk}

\begin{rmk}
  The above notion of multiexact functor can be rephrased in terms of \emph{cubical complexes,} see Remark \ref{remark_cubical_multiexactness} below.
\end{rmk}

The situation for triangulated categories is very similar and the definition of bifunctor is essentially standard (see, e.g.\ ref.\ \cite{KS2006}). The generalization to $n$ variables is also straightforward (\cite{schnurer2018sixop}). The definition is as follows:
\begin{defn}\label{def:multiexact_for_triangulated_categories}
  For triangulated categories $\mathcal{C}$, $\mathcal{D}$ and $\mathcal{T}$, we call the functor $F\colon \mathcal{C}\times\mathcal{D}\to\mathcal{T}$ \emph{biexact} if it is a triangulated functor in each variable. Specifically, if it preserves distinguished triangles in each variable, and preserves translation in each variable, i.e. $F(\Sigma c,d)\cong \Sigma F(c,d)$ and $F(c,\Sigma d)\cong \Sigma F(c,d)$, such that
  \begin{equation*}
    \begin{tikzcd}
      F(\Sigma c,\Sigma d) \arrow{d}[swap]{\cong} \arrow[r,"\cong"] \arrow[dr,phantom,"\scalebox{0.75}{-1}"] & \Sigma F(\Sigma c, d) \arrow[d,"\cong"]\\
      \Sigma F(c,\Sigma d) \arrow{r}[swap]{\cong} & \Sigma^2 F(c,d)
    \end{tikzcd}
  \end{equation*}
  is anti-commutative. Similarly, for triangulated categories $\mathcal{T}_1$, $\mathcal{T}_2$, $\dots$, $\mathcal{T}_n$ and $\mathcal{T}$, the functor $F\colon \mathcal{T}_1\times\dots\times\mathcal{T}_n\to\mathcal{T}$ is called \emph{multiexact} if it is a triangulated functor in each variable, and biexact for any pair of variables.
\end{defn}
  
As a consequence, a biexact functor $F\colon \mathcal{C}\times\mathcal{D}\to\mathcal{T}$ between triangulated categories along with two distinguished triangles $c'\to c\to c''\to \Sigma c'\in\mathcal{C}$ and $d'\to d\to d''\to\Sigma d'\in\mathcal{D}$ yields a commutative diagram
\begin{equation*}
  \begin{tikzcd}
    F(c',d')\arrow[r]\arrow[d] & F(c',d)\arrow[r]\arrow[d] & F(c',d'')\arrow[r]\arrow[d] & \Sigma F(c',d')\arrow[d]\\
    F(c,d')\arrow[r]\arrow[d] & F(c,d)\arrow[r]\arrow[d] & F(c,d'')\arrow[r]\arrow[d] & \Sigma F(c,d')\arrow[d]\\
    F(c'',d')\arrow[r]\arrow[d] & F(c'',d)\arrow[r]\arrow[d] & F(c'',d'')\arrow[r]\arrow[d]\arrow[dr,phantom,"\scalebox{0.75}{-1}"] & \Sigma F(c'',d')\arrow[d]\\
    \Sigma F(c',d')\arrow[r] & \Sigma F(c',d)\arrow[r] & \Sigma F(c',d'')\arrow[r] & \Sigma^2 F(c,d')
  \end{tikzcd}
\end{equation*}
of the type considered in ref.\ \cite[Proposition 1.1.11]{bbd1981pervers}. We shall return to these diagrams below in section \ref{sec:multi-determinant-funct-triangulated}. 

\begin{note}
  We will use $\TrCat$ to represent the category of small triangulated categories.  Similarly, we will use $\Pic$ for the category of Picard groupoids. 
\end{note}

It turns out that both these categories are in fact \emph{2-categories} in the obvious way. What is more, they can be promoted to $\Grpd$-enriched \emph{multicategories} by using multiexact functors and their natural isomorphisms as multi-morphisms. 

Recall that every monoidal category $(\mathcal{M},\otimes,I)$ gives rise to a multicategory, temporarily denoted $\mathcal{M}^\otimes$, with the same objects and multi-morphisms given by
\begin{equation*}
  \mathcal{M}^\otimes(x_1,\dots,x_n;y) = \mathcal{M}(x_1\otimes\dots \otimes x_n;y)\,.
\end{equation*}
(Some care will be needed in the right-hand side if the monoidal structure is non-strict.) It turns out this is the right-adjoint of an adjoint pair. The left adjoint assigns to every multicategory $\mathcal{M}$ a monoidal category, denoted $\mathcal{M}^\times$, whose objects are lists of objects from $\mathcal{M}$ with morphisms given by concatenation of those of $\mathcal{M}$
\begin{equation*}
  \mathcal{M}^\times ((x_1,\dots,x_n);(y_1,\dots,y_m)) = 
  \prod_{\phi\colon \mathbf{n}\to \mathbf{m}} 
  \mathcal{M}((x_i)_{i\in \phi^{-1}(j)};y_j)
\end{equation*}
where $\mathbf{n}={1,\dots,n}$ and $\phi\colon \mathbf{n}\to \mathbf{m}$ is an ``indexing" function  (see \cite{lurie2017higher,Leinster2004higher} and also \cite[Theorem 4.2]{elmendorf-mandell2009multi}). Below we will work with the monoidal (2-)categories $\TrCat^\times$ and $\Pic^\times$.
  
\begin{note}
  $\Pic$ will denote both the (enriched) category of Picard groupoids and the (enriched) multicategory of Picard groupoids. Therefore, according to this convention, $\TrCat(\mathcal{T}_1,\dots,\mathcal{T}_n;\mathcal{T})$ denotes the category of multiexact functors, and similarly for $\Pic$. In similar circumstances we will use the same name to represent both a category and the multicategory it induces. Note that $\Pic$ is closed.\footnote{It appears a complete, formal proof is not available in a single place. See \cite{elmendorf-mandell2009multi} and the more recent \cite{johnson2022homotopy,gurski2022symmetric}, which deal with the permutative case, from which one can deduce the statements for $\Pic$. The main point is the definition of the internal object and handling the inverses. Some of this can be found in \cite{aldrovandi2015biex}.}\footnote{Thanks to N.~Johnson, private communication.} (This is true for both meanings of $\Pic$.)
\end{note}

\section{Multi-determinant Functors for Triangulated Categories}
\label{sec:multi-determinant-funct-triangulated}

In this section we define multi-determinant functors. We provide two different, but equivalent, definitions: the first (in subsection~\ref{sec:definition_multideterminant}) is a direct generalization of Breuning's determinant functor; the second (in subsection~\ref{sec:cubical_complexes}) is based on categories of cubes in Picard groupoids.

\subsection{Definition of Multi-determinant Functor}
\label{sec:definition_multideterminant}

\begin{note}
When dealing with functions of multiple variables, for notational convenience, we will not show the variables that do not change. For example, the morphism
\begin{multline*}
    \det(a_1,\dots,a_{i-1},z_i,a_{i+1},\dots,a_n)
    \picadd
    \det(a_1,\dots,a_{i-1},x_i,a_{i+1},\dots,a_n) \to \\
    \to\det(a_1,\dots,a_{i-1},y_i,a_{i+1},\dots,a_n)  
\end{multline*}
will be written
  \begin{equation*}
    \det(z_i)+\det(x_i)\to\det(y_i)\,.
  \end{equation*}
\end{note}
Let $\mathcal{T}_1,\dots,\mathcal{T}_n$ be triangulated categories and $\mathcal{P}$ a Picard groupoid.
\begin{defn}\label{multi-det_functor}
  A \emph{multi-determinant functor}
  \begin{equation*}
    \GenericDet\colon \mathcal{T}_1\times\dots\times\mathcal{T}_n\to\mathcal{P}
  \end{equation*}
  is an $n$-functor
  \begin{equation*}
    \GenericDet\colon \isom(\mathcal{T}_1)\times\dots\times\isom(\mathcal{T}_n)\to\mathcal{P}
  \end{equation*}
  that is a determinant functor in each variable and satisfies the following compatibility conditions:
  \begin{enumerate}
  \item \emph{(Two Triangles Axiom)} For any distinct $i$ and $j$, and distinguished triangles
  \begin{align*}
    \Delta_i &: x_i\to y_i\to z_i\to \Sigma x_i\in \mathcal{T}_i\\
    \Delta_j &: x_j\to y_j\to z_j\to \Sigma x_j\in \mathcal{T}_j \,,
  \end{align*}
  the diagram below (pictured for $i<j$) commutes
  \begin{equation*}
    \begin{tikzcd}
&\GenericDetInput{y_i,z_j}+\GenericDetInput{y_i,x_j} \arrow{rdd}{\GenericDetInput{\Delta_j}} &\\
(\GenericDetInput{z_i,z_j}+\GenericDetInput{x_i,z_j})+(\GenericDetInput{z_i,x_j}+\GenericDetInput{x_i,x_j}) \arrow{dd}{\cong}[swap]{\commutassoc} \arrow{ru}{\GenericDetInput{\Delta_i}+\GenericDetInput{\Delta_i}} && \\
&&\GenericDetInput{y_i,y_j}\\
(\GenericDetInput{z_i,z_j}+\GenericDetInput{z_i,x_j})+(\GenericDetInput{x_i,z_j}+\GenericDetInput{x_i,x_j}) \arrow{rd}[swap]{\GenericDetInput{\Delta_j}+\GenericDetInput{\Delta_j}} &&\\
      &\GenericDetInput{z_i,y_j}+\GenericDetInput{x_i,y_j} \arrow{ruu}[swap]{\GenericDetInput{\Delta_i}}&
    \end{tikzcd}
  \end{equation*}

  \item \emph{(Triangle-Function Axiom)} For any distinct $i$ and $j$, morphism $f_i\colon a_i\to b_i$ in $\isom(\mathcal{T}_i)$ and distinguished triangle $\Delta_j \colon x_j\to y_j\to z_j\to \Sigma x_j$ in $\mathcal{T}_j$, the diagram below (pictured for $i<j$) commutes
  \begin{equation*}
    \begin{tikzcd}[sep=large]
      \GenericDetInput{a_i,z_j}+\GenericDetInput{a_i,x_j} \arrow[d, "\GenericDetInput{f_i}", swap] \arrow[r, "\GenericDetInput{\Delta_j}"] & \GenericDetInput{a_i,y_j} \arrow[d, "\GenericDetInput{f_i}"]\\
      \GenericDetInput{b_i,z_j}+\GenericDetInput{b_i,x_j} \arrow[r, "\GenericDetInput{\Delta_j}", swap] & \GenericDetInput{b_i,y_j}
    \end{tikzcd}
  \end{equation*}
  \end{enumerate}
\end{defn}

\begin{defn}\label{multi-det morph}
  Let $\mathcal{T}_i$ be triangulated categories and $\mathcal{P}$ be a Picard groupoid. For two multi-determinant functors $d_i\colon\mathcal{T}_1\times\dots\times\mathcal{T}_n\to\mathcal{P}$, a \emph{morphism between the multi-determinant functors} $\theta \colon d_1\Rightarrow d_2$ is a natural transformation that is compatible with the additivity data in each variable.
\end{defn}

In other words, $\theta$ is a natural transformation (on the isomorphism classes) in each variable such that for any distinguished triangle $\Delta_i\colon x_i\to y_i\to z_i\to\Sigma x_i$ in $\mathcal{T}_i$ the diagram below commutes.
\begin{equation*}
  \begin{tikzcd}
    d_1(z_i)+d_1(x_i)\arrow[r,"d_1(\Delta_i)"] \arrow{d}[swap]{\theta_{z_i}+\theta_{x_i}} & d_1(y_i) \arrow[d,"\theta_{y_i}"]\\
    d_2(z_i)+d_2(x_i) \arrow{r}[swap]{d_2(\Delta_i)} & d_2(y_i)
  \end{tikzcd}
\end{equation*}

For any triangulated categories $\mathcal{T}_1,\dots,\mathcal{T}_n$ and Picard groupoid $\mathcal{P}$, we will use $\Det(\mathcal{T}_1,\dots,\mathcal{T}_n;\mathcal{P})$ to denote the category whose objects are multi-determinant (or determinant) functors $\mathcal{T}_1\times\dots\times\mathcal{T}_n\to\mathcal{P}$ and whose morphisms are morphisms of multi-determinant functors (as defined above).  Note that $\Det(\mathcal{T}_1,\dots,\mathcal{T}_n;\mathcal{P})$ is a \emph{groupoid.}

\begin{note}[Virtual Objects] 
  For triangulated categories $\mathcal{T}_1,\dots,\mathcal{T}_n$, the morphisms $\det\colon\mathcal{T}_i\to V(\mathcal{T}_i)$ will denote universal determinant functors. Additionally, we will abuse notation and use $\det$ for the morphism comprised of their products and write $\det\colon\mathcal{T}_1\times\dots\times\mathcal{T}_n\to V(\mathcal{T}_1)\times\dots\times V(\mathcal{T}_n)$ instead of $\det\colon\isom(\mathcal{T}_1)\times\dots\times\isom(\mathcal{T}_n)\to V(\mathcal{T}_1)\times\dots\times V(\mathcal{T}_n)$.
\end{note}

\subsection{Cubes}\label{sec:cubes}

Let $I$ be the category $\{-1\to 0 \to 1\}$. In other words, $I$ is a category with three distinct objects: an initial object, a terminal object, and one more object.

We take $I^0$ to be a category with a single object and no nontrivial morphisms. For any integer $n\geq 1$, $I^n$ is a category with objects $(a_1,a_2,\dots,a_n)$, $a_i\in\{-1,0,1\}$, and non-identity morphisms based on $I$; to be more specific, for $i = 1,\dots,n$ there are non-identity morphisms
\begin{equation*}
  (a_1,\dots,a_{i-1},-1,a_{i+1},\dots,a_n)\to (a_1,\dots,a_{i-1},0,a_{i+1},\dots,a_n)
\end{equation*}
and 
\begin{equation*}
  (a_1,\dots,a_{i-1},0,a_{i+1},\dots,a_n)\to (a_1,\dots,a_{i-1},1,a_{i+1},\dots,a_n).
\end{equation*}
Lastly, we will use $\objects(I^n)$ to denote the category with the same objects as $I^n$ but no non-identity morphisms.

\subsubsection{Cubes in Picard groupoids}

We introduce, for any Picard groupoid $\mathcal{P}$, a cubical complex reminiscent of the Eilenberg-Mac~Lane's $Q$-complex for rings \cite{maclane1956ann}, and of subsequent adaptations to exact categories \cite{burgos2000arith,mccarthy1996chain}. Our complex is based on a system of Picard groupoids $C^n(\mathcal{P})$, for $n \geq 0$, of ``cubical shape,'' such that $C^0(\mathcal{P})=\mathcal{P}$, which we now define.

\begin{defn}
  \label{def_cube_Picard}
  For a Picard groupoid $\mathcal{P}$, an \emph{$n$-cube in $\mathcal{P}$} is a functor $S\colon \objects(I^n)\to \mathcal{P}$ along with isomorphisms 
  \begin{multline}
    \label{eq:cube_structure_morphism}
    f_i(a_0,\dots,a_{i-1},a_{i+1},\dots,a_n)\colon 
    S(a_0,\dots,a_{i-1},1,a_{i+1},\dots,a_n) + 
    S(a_0,\dots,a_{i-1},-1,a_{i+1},\dots,a_n) \\
    \xrightarrow{\cong}
    S(a_0,\dots,a_{i-1},0,a_{i+1},\dots,a_n)
  \end{multline}
  for $i=1,\dots,n$, such that for each pair $(i,j)$, $0\leq i < j \leq n$, the following diagram commutes
  \begin{equation}
    \label{eq:cube_pentagon}
    \begin{tikzcd}[column sep=6em,row sep=large]
      (S(1,1)\picadd S(-1,1)) \picadd (S(1,-1)\picadd S(-1,-1))
      \arrow[dd,"\cong","\commutassoc"'] 
      \ar[r,"\scriptstyle{ f_i(1) \picadd f_i(-1)}"] &
      S(0,1) \picadd S(0,-1) \ar[d,"f_j(0)"] \\ 
      & S(0,0) \\
      (S(1,1)\picadd S(1,-1)) \picadd (S(-1,1)\picadd S(-1,-1)) 
      \ar[r,"\scriptstyle{f_j(1) \picadd f_j(-1)}"'] &
      S(1,0) \picadd S(-1,0) \ar[u,"f_i(0)"']
    \end{tikzcd}
  \end{equation}
  (To ease the notation, we did not write the $a_k$'s that stay constant.) It is evident that the $n$-cubes form a category, in fact a groupoid, denoted $C^n(\mathcal{P})$, where morphisms are natural transformations of functors.
\end{defn}
\begin{rmk}
  The notion of cube in Definition \ref{def_cube_Picard} is valid for any symmetric monoidal category, not just Picard groupoid. However, we need only consider the latter in this paper.
\end{rmk}
\begin{ex}
  \label{example_1-cube}
  A $1$-cube in a Picard groupoid $\mathcal{P}$ is a choice of two elements $x$ and $z$, a choice for their sum called $y$, and a choice of isomorphism $x+z\to y$. This is analogous to the description of categories with sums by Segal (cf.\ \cite{segal1974catcoho}).
\end{ex}
% Moreover, 
\begin{note}
  It is convenient to draw cubes by emphasizing the shape arising from the combinatorics of the categories $I^n$. For example, a $1$-cube in $\mathcal{P}$ can be visualized as
  \begin{equation*}
    \left\lbrace
    \begin{tikzcd}[sep=small]
      -1 & 0 & 1
    \end{tikzcd}
    \right\rbrace 
    \longmapsto
    \begin{tikzcd}
      x \arrow[r, dash] & y \arrow[r, dash] & z
    \end{tikzcd},
  \end{equation*}
  whereas for $n=2$ the $2$-cube $S\colon I^2 \to \mathcal{P}$ is drawn as
  \begin{equation*}
    \left\lbrace
    \begin{tikzcd}[sep=small]
      (-1,1) & (0,1) & (1,1) \\
      (-1,0) & (0,0) & (1,0) \\
      (-1,-1) & (0,-1) & (1,-1)
    \end{tikzcd}\right\rbrace 
    \longmapsto 
    \begin{tikzcd}[sep=large]
      b \arrow[r, dash] \arrow[d, dash] & p \arrow[r, dash] \arrow[d, dash] & a \arrow[d, dash]\\
      s \arrow[r, dash] \arrow[d, dash] & z \arrow[r, dash] \arrow[d, dash] & r \arrow[d, dash]\\
      d \arrow[r, dash] & q \arrow[r, dash] & c
    \end{tikzcd},
  \end{equation*}        
  where the lines in the image showcase what elements are ``related'' via the isomorphisms, which form the commutative diagram
  \begin{equation*}
    \begin{tikzcd}
      (a\picadd b) \picadd (c\picadd d) \arrow{rr}{\cong}[swap]{\commutassoc} \arrow{d} && (a\picadd c)\picadd (b\picadd d) \arrow{d}\\
      p\picadd q \arrow{r} & z & r\picadd s \arrow{l}
    \end{tikzcd}
  \end{equation*}
  as per diagram \eqref{eq:cube_pentagon} in Definition \ref{def_cube_Picard} above.
\end{note}

One may wonder whether additional compatibility conditions, beyond that in Definition \ref{def_cube_Picard}, should be considered. Indeed, we note that there are additional commutative diagrams, i.e. compatibility conditions, for $n$-cubes when $n\geq 3$. For example, if $S\colon I^3\to \mathcal{P}$ is a $3$-cube, we may draw a (large) diagram resulting from the decomposition of the center vertex $S(0,0,0)$ of the cube using the three possible instances of \eqref{eq:cube_pentagon}.  However, the higher compatibility conditions are automatically satisfied for Picard groupoids, and in fact for symmetric monoidal categories—this is a result of the first section of \cite{iteratedmonoidal}. More specifically, we have 
\begin{prop}
  Let $\mathcal{C}$ be a symmetric monoidal category. For any cube $S\colon I^n\to \mathcal{C}$ and any positive integer $k$ such that $3\leq k\leq n$, all diagrams constructed using \eqref{eq:cube_structure_morphism}, starting from the central vertex $S(0,\dots,0)$ in a manner analogous to \eqref{eq:cube_pentagon}, commute.
\end{prop}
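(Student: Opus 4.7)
The plan is to reduce the asserted higher coherences to the base case \eqref{eq:cube_pentagon} by induction on $k$, using Mac Lane's coherence theorem for symmetric monoidal categories to control the canonical rearrangements arising from $\commutassoc$.

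First I would observe that iterating the inverses of the structure isomorphisms $f_i$ produces, from the central value $S(0,\dots,0)$, an expression as an iterated sum of the $2^k$ corner values $S(\epsilon_1,\dots,\epsilon_k)$ with $\epsilon_i \in \{-1,1\}$. Different orders in which the $f_i^{-1}$ are applied yield different parenthesized orderings of this iterated sum, but all such orderings determine the same underlying multiset of summands. A diagram of the kind appearing in the statement is therefore a composite of the $f_i$'s together with canonical coherence isomorphisms of $\mathcal{C}$ (built from associators, unitors, and the symmetry, i.e.\ instances of $\commutassoc$), whose outer source and target both equal $S(0,\dots,0)$.

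Next I would show that any two orderings of the indices $\{1,\dots,k\}$ differ by a sequence of adjacent transpositions. Each such transposition, applied at some intermediate stage, restricts to a $2$-cube obtained from $S$ by fixing the remaining coordinates at $\pm 1$, and swapping the two $f$'s involved is governed precisely by the pentagon \eqref{eq:cube_pentagon} for that cross-sectional $2$-cube—up to canonical rearrangements that shuffle the other summands into position. By Mac Lane's coherence theorem, any two natural rearrangements built from associators and $\commutassoc$'s and having the same source and target are equal. Combining this with the inductive hypothesis for $(k-1)$-cubes and the base axiom \eqref{eq:cube_pentagon} then forces commutativity of any diagram of the described form.

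The main obstacle, and the real content of the argument given in \cite{iteratedmonoidal}, is the careful identification of the class of ``diagrams constructed using \eqref{eq:cube_structure_morphism}'' and the verification that every such diagram factors, up to canonical coherence cells, as a pasting composite of base pentagons \eqref{eq:cube_pentagon}. Once this normal-form factorization is established, commutativity is essentially a bookkeeping exercise, and the proposition follows by specializing the iterated-monoidal coherence framework of \cite{iteratedmonoidal} to the symmetric monoidal setting.
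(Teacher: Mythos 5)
Your argument is correct in substance and rests on the same two ingredients as the paper's proof---the pairwise axiom \eqref{eq:cube_pentagon} and coherence of the symmetric monoidal structure---but it organizes them differently. The paper writes out the single ``big'' diagram obtained by decomposing $S(0,0,0)$ for $k=3$ in the three possible orders, identifies its central cell with the coherence diagram for $\commutassoc$ from \cite{iteratedmonoidal} (whose commutativity is \emph{equivalent} to the symmetry of $\picadd$ and subsumes the pentagon and hexagon), observes that every remaining cell is either a functoriality square for $\commutassoc$ or an instance of \eqref{eq:cube_pentagon} or a sum thereof, and then defers the cases $k>3$ to the first section of \cite{iteratedmonoidal}. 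You instead induct on $k$ and reduce any two decomposition orders to a chain of adjacent transpositions, each governed by a cross-sectional instance of \eqref{eq:cube_pentagon}, with Mac~Lane's coherence theorem absorbing the canonical rearrangements. Your bubble-sort organization makes the induction explicit and is arguably more systematic; the paper's version has the advantage of exhibiting concretely \emph{where} symmetry is genuinely used (the $\commutassoc$-coherence cell). Both arguments, as you candidly note, leave the full combinatorial bookkeeping of ``which diagrams are meant'' to \cite{iteratedmonoidal}.

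Two small cautions. First, your statement of coherence must be phrased for words in \emph{distinct formal variables}: in a symmetric monoidal category two canonical maps with the same source and target \emph{object} need not coincide (e.g.\ $c_{a,a}$ versus the identity---precisely the $k$-invariant discussed in Section \ref{sec:picard_groupoids}). What you need, and what is true, is that two canonical rearrangements between the same parenthesized words in the distinct summands $S(\epsilon_1,\dots,\epsilon_k)$ agree. Second, when a transposition is performed at an intermediate stage of the decomposition, the coordinates not yet decomposed are still $0$, not $\pm 1$, so the relevant cross-sections are faces $\partial^{\alpha}_j$ with $\alpha\in\{-1,0,1\}$; this is harmless, since \eqref{eq:cube_pentagon} is required for every assignment of the constant indices and sums of commuting instances commute, but the restriction to $\pm 1$ as written would not cover all the cells you need.
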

\begin{proof}
  Following the ideas of \cite{iteratedmonoidal}, if we start from the case of a $3$-cube $S$ and proceed in the manner just alluded from the center vertex $S(0,0,0)$, the resulting diagram is the same as the ``big'' one in \cite[p.\ 284]{iteratedmonoidal}, when all monoidal structures are the same, or as the one appearing in \cite[App.\ D]{aldrovandi2015biex}. We write it—schematically—as:
  \begin{equation*}
    \begin{tikzcd}[row sep=large, font=\small]
      ((a_1 \picadd a_2) \picadd (b_1 \picadd b_2)) \picadd
      ((c_1 \picadd c_2) \picadd (d_1 \picadd d_2))
      \arrow[d,"c\picadd c"'] 
      \arrow[r,"c"] &
      ((a_1 \picadd a_2) \picadd (c_1 \picadd c_2)) \picadd
      ((b_1 \picadd b_2) \picadd (d_1 \picadd d_2))
      \arrow[d,"c\picadd c"] \\
      ((a_1 \picadd b_1) \picadd (a_2 \picadd b_2)) \picadd
      ((c_1 \picadd d_1) \picadd (c_2 \picadd d_2))
      \arrow[d,"c"'] &
      ((a_1 \picadd c_1) \picadd (a_2 \picadd c_2)) \picadd
      ((b_1 \picadd d_1) \picadd (b_2 \picadd d_2))
      \arrow[d,"c"] \\
      ((a_1 \picadd b_1) \picadd (c_1 \picadd d_1)) \picadd
      ((a_2 \picadd b_2) \picadd (c_2 \picadd d_2))
      \arrow[r,"c\picadd c"'] &
      ((a_1 \picadd c_1) \picadd (b_1 \picadd d_1)) \picadd
      ((a_2 \picadd c_2) \picadd (b_2 \picadd d_2))
    \end{tikzcd}
  \end{equation*}
  This diagram is the coherence for the morphism we call $\commutassoc$, subsuming Mac~Lane's pentagon and hexagon diagrams, and it commutes if and only if the monoidal structure on $\mathcal{C}$ is symmetric (see \cite{aldrovandi2015biex} for an explicit proof).  All the other cells in the total diagram we get starting from $S(0,0,0)$ are either squares, which commute by functoriality of $\commutassoc$, or instances of \eqref{eq:cube_pentagon} and sums thereof, which also commute. This takes care of the case $n=k=3$, and all other cases follow from this and the results in the first section of \cite{iteratedmonoidal}.
\end{proof}

\begin{rmk}
  The first section of \cite{iteratedmonoidal} additionally implies that an $n$-cube in Picard groupoid $\mathcal{P}$ can be recursively defined: The set of all $1$-cubes in $\mathcal{P}$ is itself a Picard groupoid. Thus, we can take a $1$-cube in the set of all $1$-cubes in $\mathcal{P}$---this yields a $2$-cube in $\mathcal{P}$.  Continuing recursively, we can define an $n$-cube in $\mathcal{P}$ to be a $1$-cube in the set of all $(n-1)$-cubes in $\mathcal{P}$.
\end{rmk}

\begin{rmk}
  In light of the previous remark, and again by \cite{iteratedmonoidal}, the existence of the tower
  \begin{equation*}
    C^0(\mathcal{P}) = \mathcal{P}\,,\quad
    C^1(\mathcal{P}) \,, \quad
    C^2(\mathcal{P}) \,, \quad \dotsm
  \end{equation*}
  is \emph{equivalent} to $\mathcal{P}$ carrying a symmetric monoidal structure. This is valid for symmetric monoidal categories in general. 
\end{rmk}

\subsubsection{Cubes in triangulated categories}
Next, we move to defining cubes in triangulated categories. However, we will focus on lower dimensional cubes because triangulated categories do not have enough guaranteed structure for higher dimensional cubes.

Let us consider the $3\times 3$\footnote{Of course, the diagram should more properly regarded as being $4\times 4$, but the rightmost column and the bottom row are determined by the rest, so it is easier to consider it as a $3\times 3$ one.} commutative diagram \cite{bbd1981pervers,may2001ttt}
\begin{equation}
  \label{eq:9x9}
  \begin{tikzcd}
    x'\arrow[r,"f'"]\arrow[d,"p'"] & y' \arrow[r,"g'"]\arrow[d,"q'"] & 
    z' \arrow[r,"h'"]\arrow[d,"r'"] & \Sigma x'\arrow[d,"\Sigma p'"]\\
    x\arrow[r,"f"]\arrow[d,"p"] & y\arrow[r,"g"]\arrow[d,"q"] & 
    z\arrow[r,"h"]\arrow[d,"r"] & \Sigma x\arrow[d,"\Sigma p"]\\
    x''\arrow[r,"f''"]\arrow[d,"p''"] & y''\arrow[r,"g''"]\arrow[d,"q''"] & 
    z''\arrow[r,"h''"]\arrow[d,"r''"]\arrow[dr,phantom,"\scalebox{0.75}{-1}"] & \Sigma x''\arrow[d,"\Sigma p''"]\\
    \Sigma x'\arrow[r,"\Sigma f'"] & \Sigma y'\arrow[r,"\Sigma g'"] & 
    \Sigma z'\arrow[r,"\Sigma h'"] & \Sigma^2 x'
  \end{tikzcd}
\end{equation}
(also called a 9 term diagram) whose rows and columns are distinguished triangles.

\begin{defn}[\cite{bbd1981pervers,may2001ttt}]
  \label{Verdier_Structure}
  In a triangulated category $\mathcal{T}$, the diagram \eqref{eq:9x9} admits a \emph{Verdier structure} when there exists an object $A$ of $\mathcal{T}$ and octahedrons
  \begin{gather*}
    \begin{tikzcd}[ampersand replacement=\&]
      x' \arrow[r,equal] \arrow[d,"p'"]\& 
      x' \arrow[d,"f\circ p'"] \\
      x \arrow[r,"f"] \arrow[d,"p"] \& 
      y \arrow[r,"g"] \arrow[d,"\alpha"] \&
      z \arrow[r,"h"] \arrow[d, equal] \&
      \Sigma x \arrow[d,"\Sigma p"] \\
      x'' \arrow[r,"\epsilon"] \arrow[d,"p''"] \&
      A \arrow[r,"\eta"] \arrow[d,"v"] \&
      z \arrow[r,"\zeta"] \& \Sigma x'' \\
      \Sigma x' \arrow[r, equal] \& \Sigma x'
    \end{tikzcd}
    \qquad
    \begin{tikzcd}[ampersand replacement=\&]
      x' \arrow[r,"f'"] \arrow[d, equal] \& 
      y' \arrow[r,"g'"] \arrow[d,"q'"] \& 
      z' \arrow[r,"h'"] \arrow[d,"\beta"] \& 
      \Sigma x' \arrow[d, equal] \\
      x' \arrow[r,"q'\circ f'"] \& 
      y \arrow[r,"\alpha"] \arrow[d,"q"] \& 
      A \arrow[r,"u"] \arrow[d,"\gamma"] \& \Sigma x' \\
      \& y'' \arrow[r, equal] \arrow[d,"q''"] \& y'' \arrow[d,"\delta"]\\
      \& \Sigma y' \arrow[r,"\Sigma g'"] \& \Sigma z'
    \end{tikzcd}
    \\
    \begin{tikzcd}[ampersand replacement=\&]
        x'' \arrow[r,"\epsilon"] \arrow[d, equal] \& 
        A \arrow[r,"\eta"] \arrow[d,"\gamma"] \& 
        z \arrow[r,"\zeta"] \arrow[d,"r"] \& \Sigma x'' \arrow[d, equal] \\
        x'' \arrow[r,"f''"] \& y'' \arrow[r,"g''"] \arrow[d,"\delta"] \& 
        z'' \arrow[r,"h''"] \arrow[d,"r''"] \& \Sigma '' \\
        \& \Sigma z' \arrow[r, equal] \arrow[d,"\Sigma \beta"] \& 
        \Sigma z' \arrow[d,"\Sigma r'"]\\
        \& \Sigma A \arrow[r,"\Sigma\eta"] \& \Sigma z
    \end{tikzcd}
  \end{gather*}
\end{defn}

\begin{rmk}
  From ref.\ \cite[Prop. 1.1.11]{bbd1981pervers} (see also \cite{may2001ttt}) we have that every diagram of the form
  \begin{equation*}
    \begin{tikzcd}
      x'\arrow[r,"f'"]\arrow[d,"p'"] & y' \arrow[r,"g'"]\arrow[d,"q'"] & z' \\
      x\arrow[r,"f"]\arrow[d,"p"] & y\arrow[r,"g"]\arrow[d,"q"] & z\\
      x'' & y'' &
    \end{tikzcd}
  \end{equation*}
  can be completed to one of the form \eqref{eq:9x9} by combining three octahedra as in Definition \ref{Verdier_Structure}. The resulting 9 term diagram will thus have a Verdier structure.
  Given an arbitrary $3 \times 3$ diagram, we can forget the bottom and rightmost maps to get a diagram of the above form and \emph{then} apply the construction of \cite{bbd1981pervers,may2001ttt} to reconstruct a new $3\times 3$ diagram with a Verdier structure.
  Note that this newly constructed $3\times 3$ diagram might differ from the original by having an isomorphic object at the location $z''$. It's also clear these diagrams are non-unique.
\end{rmk}
\begin{defn}
  For $n = 0, 1, 2$ and a triangulated category $\mathcal{T}$, an \emph{$n$-cube in $\mathcal{T}$} is: 
  \begin{enumerate}
    \item an object $x$ of $\mathcal{T}$ if $n=0$;
    \item a distinguished triangle $\Delta$ if $n=1$; and
    \item a diagram \eqref{eq:9x9} \emph{equipped with a Verdier structure} if $n=2$.
  \end{enumerate}
  The $n$-cubes in $\mathcal{T}$ form a groupoid by taking object-wise isomorphisms. We denote it by $C^n(\isom \mathcal{T})$, and note that $C^0(\mathcal{T})=\isom (\mathcal{T})$.
\end{defn}

\begin{rmk}
  The octahedron
  \begin{equation*}
    \begin{tikzcd}
      x \arrow[r] \arrow[d, equal] & y \arrow[r] \arrow[d] & 
      z' \arrow[r] \arrow[d] & \Sigma x \arrow[d, equal] \\
      x \arrow[r] & z \arrow[r] \arrow[d] & 
      y' \arrow[r] \arrow[d] & \Sigma x \\
      & x' \arrow[r, equal] \arrow[d] & x' \arrow[d] \\
      & \Sigma y \arrow[r] & \Sigma z'
    \end{tikzcd}
  \end{equation*}
  can be written as a $2$-cube
  \begin{equation*}
    \begin{tikzcd}
      x \ar[r] \ar[d,"\id"] & y \ar[r] \ar[d] &
      z' \ar[r] \ar[d] & \Sigma x \ar[d,"\id"] \\
      x \ar[r] \ar[d] & z \ar[r] \ar[d] &
      y' \ar[r] \ar[d] & \Sigma x \ar[d] \\
      0 \ar[r] \ar[d] & x' \ar[r,"\id"] \ar[d] &
      x' \ar[r] \ar[d] & 0 \ar[d] \\
      \Sigma \ar[r] x & \Sigma y \ar[r] & 
      \Sigma z' \ar[r] & \Sigma^2 x 
    \end{tikzcd}
  \end{equation*}
  with an evident Verdier structure (relative to Definition \ref{Verdier_Structure}) with $A=y'$ and in which one of the octahedra is just the one we start from, and the other (trivial) two are:
  \begin{equation*}
    \begin{tikzcd}
      x \ar[r,equal] \ar[d,"\id"] & x \ar[d] \\
      x \ar[r] \ar[d] & z \ar[r] \ar[d] &
      y' \ar[r] \ar[d,equal] & \Sigma x \ar[d] \\
      0 \ar[r] \ar[d] & y' \ar[r,"\id"] \ar[d] &
      y' \ar[r] & 0 \\
      \Sigma x \ar[r] & \Sigma x
    \end{tikzcd}
    \qquad
    \begin{tikzcd}
      0 \ar[r] \ar[d,equal] & y' \ar[r,"\id"] \ar[d] &
      y' \ar[r] \ar[d] & 0 \ar[d,equal] \\
      0 \ar[r] & x' \ar[r,"\id"] \ar[d] &
      x' \ar[r] \ar[d] & 0 \\
      & \Sigma z' \ar[r,equal] \ar[d] & \Sigma z' \ar[d] \\
      & \Sigma y' \ar[r,"\id"] & \Sigma y' \\
    \end{tikzcd}
  \end{equation*}
\end{rmk}

\begin{rmk}
  In triangulated categories, $n$-cubes ($n \leq 2$) can also be defined as functors from $I^n$ to $\mathcal{T}$ satisfying obvious additional conditions.
\end{rmk}

\subsection{Cubical Complexes}
\label{sec:cubical_complexes}

We use the term ``complex'' in a loose way, to denote diagrams comprised of the categories of cubical shapes introduced in the previous sections.

Recall from \cite{loday2982finitely,burgos2000arith} that the categories $I^n$ come with functors $\delta_\alpha^j \colon I^{n-1} \to I^n$, for $\alpha=-1,0,1$ and $j=1,\dots,n$, corresponding to the inclusion in the $j^\text{th}$-direction in one of the three possible positions $-1,0,1$. Let $\mathcal{C}$ be a symmetric monoidal category. Recall the groupoids $C^n(\mathcal{C})$ of $n$-cubes $S\colon I^n\to \mathcal{C}$ (a shorthand for $S\colon \objects(I^n) \to\mathcal{C}$). 

Composing with the $\delta_\alpha^j$ we get ``face maps''
\begin{equation}
  \label{eq:cubical_face}
  \partial^\alpha_j \colon C^n(\mathcal{C}) \to C^{n-1}(\mathcal{C})\,;
  \qquad
  j=1,\dots,n\,,\; \alpha=-1,0,1\,.
\end{equation}
Thus, if $S$ is an $n$-cube, we have
\begin{equation*}
  \partial^\alpha_{j}(S)(a_1,\dots,a_{n-1}) = S(a_1,\dots,a_{j-1},\alpha,a_j,\dots,a_{n-1})
\end{equation*}
There are also ``degeneracy maps'' going in the opposite direction:
\begin{equation}
  \label{eq:cubical_degeneracy}
  \begin{gathered}
    s_\alpha^j \colon C^n(\mathcal{C}) \to C^{n+1}(\mathcal{C})\,;
    \qquad 
    j=1,\dots,n\,,\; \alpha = -1,1\,,\\
    s_\alpha^j (S) (a_1,\dots,a_{n+1}) = 
    \begin{cases}
      S(a_1,\dots,a_{j-1},a_{j+1}\dots,a_{n+1}) & a_j \neq \alpha \\
      0 & a_j = \alpha \,.
    \end{cases}
  \end{gathered}
\end{equation}
(Note that these ``degeneracy maps'' are not induced by corresponding functors $I^n\to I^{n-1}$.)  Both face and degeneracy ``maps'' are actually functors, since they evidently are compatible with morphisms of cubes (which are defined pointwise).  We also have the relations:
\begin{equation}
  \label{eq:cubical_relations}
  \begin{aligned}
    \partial^\alpha_i\partial^\beta_j &= \partial^\beta_{j-1}\partial^\alpha_i 
    \quad
    i < j\,;\; \alpha,\beta = -1,0,1 \\
    s^j_\beta s^i_\alpha &= s^i_\alpha s^{j-1}_\beta 
    \quad
    i < j\,;\; \alpha,\beta = -1,1 \\
    \partial^\alpha_i s^j_\beta &= 
    \begin{cases}
      s^{j-1}_\beta \partial^\alpha_i & i < j\,;\;\alpha = -1,0,1\,, \beta = \pm 1 \\
      \id & i = j\,;\; \alpha = -1,0,1\,, \beta = \pm 1\,, \alpha \neq \beta \\
      0 & i = j\,;\; \alpha = \beta = \pm 1 \\
      s^j_\beta \partial^\alpha_{i-1} & i > j\,;\;\alpha = -1,0,1\,, \beta = \pm 1 
    \end{cases}
  \end{aligned}
\end{equation}
The maps $\partial^\alpha_i$ and $s^i_\beta$ and the relations they satisfy still make sense for the objects $C^k(\mathcal{T})$, where $\mathcal{T}$ is a triangulated category, introduced above, with the limitation that $k\leq 2$.
\begin{defn}
  \label{cubical_complex}
  Let $\mathcal{C}$ be a category for which a definition of $n$-cubes for all $n$ (or for $n=0,\dots,k$) makes sense. The \emph{cubical complex on $\mathcal{C}$}, denoted $C^\ast(\mathcal{C})$ (or $C^{\ast\leq k}(\mathcal{C})$, respectively), is the diagram of groupoids:
  \begin{equation*}
    \begin{tikzcd}
      \mathcal{C} = C^0(\mathcal{C}) 
      \arrow[r,shift left=2ex]
      \arrow[r,shift left=1ex] &
      C^1(\mathcal{C}) 
      \arrow[r,shift left=4ex]
      \arrow[r,shift left=2ex,phantom,"\dotsi"{font=\tiny}]
      \arrow[r,shift left=1ex]
      \arrow[l] \arrow[l,shift left=1ex] \arrow[l,shift left=2ex]  &
      C^2(\mathcal{C}) 
      \arrow[l]
      \arrow[l,shift left=2ex,phantom,"\dotsi"{font=\tiny,swap}]
      \arrow[l,shift left=4ex]
      &
      \dotsi
    \end{tikzcd}
  \end{equation*}
\end{defn}
\begin{rmk}
  While we will only consider the cases $\mathcal{C}=\mathcal{P}$, a Picard groupoid, or $\mathcal{C}=\mathcal{T}$, a triangulated category, Definition \ref{cubical_complex} can be applied more generally. For example, $\mathcal{C}$ can be a category with sums and a zero object, or, more in keeping with the original idea, an exact category or even a Waldhausen one.
\end{rmk}
\begin{rmk}
  Despite the name, the diagram in Definition \ref{cubical_complex} is a diagram of groupoids, as opposed to a complex in the strict sense of the word. We can obtain from it an actual complex, which turns out to be the analog of Eilenberg-Mac~Lane's $Q$-construction, as follows (see \cite{burgos2000arith,mccarthy1996chain}). Let $F$ be a functor from groupoids to an abelian category $\mathcal{A}$. Define $Q'_n(\mathcal{C},F) = F(C^n(\mathcal{C}))$, and $\partial_n \colon Q'_n(\mathcal{C},F) \to Q'_{n-1}(\mathcal{C},F)$ by
  \begin{equation*}
    \partial_n = \sum_{i=1}^n \sum_{\alpha=-1}^1 (-1)^{i+\alpha+1} F(\partial^\alpha_i)\,.
  \end{equation*}
  It is easily verified that $\partial^2 = 0$, so that $Q'_\bullet(\mathcal{C},F)$ is a complex in $\mathcal{A}$. Furthermore, $\partial$ preserves the subobject generated by the images of $F(s^i_\alpha)$, so that we can define $Q_\bullet(\mathcal{C},F)$ as the quotient of $Q'_\bullet(\mathcal{C},F)$ by the degenerate subcomplex. 

  With $\mathcal{A}=\mathbf{Ab}$, the category of abelian groups, and $F(\mathcal{C}) = \ZZ[\objects(\mathcal{C})]$, this is literally the Eilenberg-Mac~Lane's $Q$-complex if $\mathcal{C}$ is an abelian group, and a direct generalization of it if $\mathcal{C}$ is a Picard groupoid \cite{mccarthy1996chain}. Analogously to the classical case, these complexes behave very well relative to multiexact maps $\mathcal{C}_{1}\times \dots \times \mathcal{C}_{n}\to \mathcal{C}'$, such as the multiplication map $m\colon \mathcal{C}\times \mathcal{C}\to \mathcal{C}$ and its higher associates when $\mathcal{C}$ is a categorical ring (see below, sect.~\ref{sec:det_ttt}). Such complexes will be relevant in a sequel to this paper, when we study the Mac~Lane's cohomology of the determinant functor $\mathcal{C}=V(\mathcal{T})$ of a tensor triangulated category $\mathcal{T}$ (again, see below). They will not be used in what follows.

\end{rmk}

\subsection{Alternative Definition of Multi-determinant Functor}
\label{sec:alternative_definition}

We introduce a secondary definition of multi-determinant functor, and prove in Theorem \ref{defs_of_det_are_equivalent} that our two definitions of multi-determinant functor are equivalent.

\begin{defn}\label{cubical_det}
  For a triangulated category $\mathcal{T}$ and Picard groupoid $\mathcal{P}$, a \emph{cubical determinant functor} is a morphism of diagrams
  \begin{equation*}
    D\colon C^{\leq 2}(\isom\mathcal{T})\to C^{\leq 2}(\mathcal{P}) \,.
  \end{equation*}
\end{defn}
\begin{rmk}
  \label{remark_symmetric_monoidal}
  Let $\mathcal{C}$ and $\mathcal{D}$ be two symmetric monoidal categories. To say that a functor $F \colon \mathcal{C} \to \mathcal{D}$ is symmetric monoidal is precisely the same as saying that it extends to a morphism of diagrams
  \begin{equation*}
    F \colon C^\ast(\mathcal{C}) \to C^\ast (\mathcal{D})\,,
  \end{equation*}
  where, in effect, we need only consider $\ast \leq 2$. Thus, all conditions characterizing $F$ as a symmetric monoidal functor are built in the notion of morphism of cubes.
\end{rmk}
The above definition can be generalized to several variables, beginning with the cube complex itself.
\begin{defn}
  For categories $\mathcal{C}_1,\dots,\mathcal{C}_m$, the \emph{cubical complex} $C^\ast(\mathcal{C}_1,\dots,\mathcal{C}_m)$ is the diagram whose $n$-th level is
  \begin{equation*}
    C^n(\mathcal{C}_1,\dots,\mathcal{C}_m) \coloneqq 
    \coprod_{\underset{n_i\geq 0}{\Sigma n_i = n}} 
    C^{n_1}(\mathcal{C}_1)\times\dots\times C^{n_m}(\mathcal{C}_m)  
  \end{equation*}
  with face and degeneracy maps induced from $C^\ast(\mathcal{C}_i)$, as defined above Definition \ref{cubical_complex}.  Similarly, $C^{\ast\leq k}(\mathcal{C}_1,\dots,\mathcal{C}_m)$ denotes the analogous diagram truncated below level $k$.
\end{defn}
\begin{rmk}
  \label{remark_cubical_multiexactness}
  Generalizing Remark \ref{remark_symmetric_monoidal}, let $\mathcal{C}_1,\dots,\mathcal{C}_n, \mathcal{D}$ be symmetric monoidal categories. An $n$-functor
  \begin{math}
    F \colon \mathcal{C}_1 \times \dots \times \mathcal{C}_n \to \mathcal{D}
  \end{math}
  is multiexact precisely when it extends to a morphism of diagrams
  \begin{equation*}
    F \colon C^\ast (\mathcal{C}_1 \times \dots \times \mathcal{C}_n)
    \to C^\ast(\mathcal{D})\,.
  \end{equation*}
  In other words, cubical diagrams and their morphisms encode multi-exactness.
\end{rmk}
\begin{defn}
  \label{cubical_multidet}
  For triangulated categories $\mathcal{T}_1,\dots,\mathcal{T}_m$ and Picard groupoid $\mathcal{P}$, a \emph{cubical multi-determinant functor} is a functor
  \begin{equation*}
    D\colon C^{\leq 2}(\isom(\mathcal{T}_1),\dots, \isom(\mathcal{T}_m))\to C^{\leq 2}(\mathcal{P})  
  \end{equation*}
  in the sense that it maps $n$-cubes to $n$-cubes and respects the face and degeneracy maps.
\end{defn}

\begin{lemma}
  \label{det_with_zero_as_an_input}
  For triangulated categories $\mathcal{T}_1,\dots,\mathcal{T}_m$ and Picard groupoid $\mathcal{P}$, if 
  \begin{equation*}
    D\colon C^{\leq 2}(\isom(\mathcal{T}_1),\dots, \isom(\mathcal{T}_m))\to C^{\leq 2}(\mathcal{P})
  \end{equation*} 
  is a cubical multi-determinant, then for $0$ in $\mathcal{T}_i$ and $a_k$ in $\mathcal{T}_k$, $k\in\{1,\dots,m\}-\{i\}$, we have
  \begin{equation*}
    D(a_1,\dots, a_{i-1},0,a_{i+1},\dots,a_m)\cong 0.
  \end{equation*}
\end{lemma}
\begin{proof}
  Since $0\to 0\to 0\to \Sigma 0$ is a distinguished triangle in $\mathcal{T}_i$, then
  \begin{multline*}
    (a_1,\dots, a_{i-1},0,a_{i+1},\dots,a_m)\to 
    (a_1,\dots, a_{i-1},0,a_{i+1},\dots,a_m)\to 
    (a_1,\dots, a_{i-1},0,a_{i+1},\dots,a_m) \\ \to
    \Sigma (a_1,\dots, a_{i-1},0,a_{i+1},\dots,a_m)
  \end{multline*}
  is a $1$-cube in $C^1(\isom(\mathcal{T}_1)\times\dots\times \isom(\mathcal{T}_m))$. Therefore, its image is a $1$-cube in $\mathcal{P}$, i.e.
  \begin{multline*}
    D(a_1,\dots, a_{i-1},0,a_{i+1},\dots,a_m) \picadd 
    D(a_1,\dots, a_{i-1},0,a_{i+1},\dots,a_m)\xrightarrow{\cong} \\
    \xrightarrow{\cong} D(a_1,\dots, a_{i-1},0,a_{i+1},\dots,a_m).
  \end{multline*}
  But all objects are invertible in $\mathcal{P}$, which gives the desired result.
\end{proof}

\begin{thm}
  \label{defs_of_det_are_equivalent}
  The two definitions of multi-determinant functors, from Definitions \ref{multi-det_functor} and \ref{cubical_multidet}, are equivalent.
\end{thm}
\begin{proof}
  Let $\mathcal{T}_1,\dots,\mathcal{T}_m$ be triangulated categories and $\mathcal{P}$ be a Picard groupoid.

  First, suppose $d\colon \mathcal{T}_1\times\dots\times\mathcal{T}_m\to \mathcal{P}$ is a multi-determinant functor in the sense of definition \ref{multi-det_functor}.
  Then $d$ is a morphism $C^0(\isom(\mathcal{T}_1),\dots, \isom(\mathcal{T}_m))\to C^0(\mathcal{P})$. We claim this morphism of level $0$ induces morphisms of the higher levels which are compatible with the face and degeneracy maps.

  Indeed, for level $1$, a $1$-cube in $C^1(\isom(\mathcal{T}_1),\dots, \isom(\mathcal{T}_m))$ is a distinguished triangle $x_i\to y_i\to z_i\to \Sigma x$ in $\mathcal{T}_i$ along with objects $a_j$ in $\mathcal{T}_j$ for $j = 1,\dots,i-1,i+1,\dots m$.  Since $d$ is a multi-determinant functor, it is a determinant functor in variable $i$. Therefore, suppressing the variables $a_j$ for notational convenience, we have 
  \begin{equation*}
    d(z_i)+d(x_i)\xrightarrow{\cong} d(y_i)\,,
  \end{equation*}
  that is
  \begin{equation*}
    d\bigl(
      \begin{tikzcd}[cramped,sep=small]
        x_i \ar[r,dash] & y_i \ar[r,dash] & z_i
      \end{tikzcd}
    \bigr) = 
    \begin{tikzcd}[cramped,sep=small]
      d(x_i) \ar[r,dash] & d(y_i) \ar[r,dash] & d(z_i)
    \end{tikzcd}
  \end{equation*}
  is a $1$-cube in $\mathcal{P}$. Furthermore, it is clear the face and degeneracy maps between levels 0 and 1 are compatible because things of the form 
  \begin{math}
    \begin{tikzcd}[cramped,sep=small]
      x \ar[r,equal] & x \ar[r] & 0 \ar[r] & \Sigma x
    \end{tikzcd}
  \end{math}
  and 
  \begin{math}
    \begin{tikzcd}[cramped,sep=small]
      0 \ar[r] & x \ar[r,equal] & x \ar[r] & 0 
    \end{tikzcd}
  \end{math}
  are always distinguished triangles.

  For level $2$, there are two types of $2$-cubes in $C^2(\isom(\mathcal{T}_1),\dots, \isom(\mathcal{T}_m))$: the first type is a distinguished triangle $x_i\to y_i\to z_i \to \Sigma x_i$ in $\mathcal{T}_i$ and a distinguished triangle $x_j\to y_j\to z_j\to \Sigma x_j$ in $\mathcal{T}_j$ for $i<j$ along with objects $a_k$ in $\mathcal{T}_k$ for $k \in \{1,\dots,m\}-\{i,j\}$;
  the second type is a $2$-cube in $C^2(\mathcal{T}_i)$ along with objects $a_k$ in $\mathcal{T}_k$ for $k \in \{1,\dots,m\}-\{i\}$.  Suppressing the $a_k$ for notational convenience, the image of the first type of $2$-cube under $d$ is
  \begin{equation*}
    \begin{tikzcd}
      d(z_i,x_j) \arrow[r, dash] \arrow[d, dash] & d(z_i,y_j) \arrow[r, dash] \arrow[d, dash] & d(z_i,z_j) \arrow[d, dash]\\
      d(y_i,x_j) \arrow[r, dash] \arrow[d, dash] & d(y_i,y_j) \arrow[r, dash] \arrow[d, dash] & d(y_i,z_j) \arrow[d, dash]\\
      d(x_i,x_j) \arrow[r, dash] & d(x_i,y_j) \arrow[r, dash] & d(x_i,z_j)
    \end{tikzcd}.
  \end{equation*}
  Notice this image is $2$-cube in $\mathcal{P}$ due to the two triangles axiom in the multi-determinant functor definition. As for the second type of $2$-cube, its image under $d$ is guaranteed to be a $2$-cube in $\mathcal{P}$ by \cite[Lemma 3.9]{breuning2011determinant}.

  Second, suppose $D\colon C^{\ast\leq 2}(\isom(\mathcal{T}_1),\dots, \isom(\mathcal{T}_m))\to C^{\ast\leq 2}(\mathcal{P})$ is a cubical multi-determinant functor in the sense of definition \ref{cubical_multidet}.  Then the level zero portion of $D$, which we will call $D^0$, defines a functor $\isom(\mathcal{T}_1)\times\dots\times \isom(\mathcal{T}_m)\to \mathcal{P}$. We claim $D^0$ is a multi-determinant functor.

  Indeed, fix an $i\in \{1,\dots,m\}$ and objects $a_k$ in $\mathcal{T}_k$ for $k \in \{1,\dots,m\}-\{i\}$. We will show $D^0$ is a determinant functor in the variable $i$. As before, we will notationally suppress the $a_k$ for convenience. 
  
  Suppose $x\to y\to z \to \Sigma x$ is a distinguished triangle in $\mathcal{T}_i$. Since it is by definition a $1$-cube 
  \begin{math}
    \begin{tikzcd}[cramped,sep=small]
      x \ar[r,dash] & y \ar[r,dash] & z 
    \end{tikzcd}
  \end{math}
  its image under $D$ is a $1$-cube in $\mathcal{P}$, i.e.
  \begin{equation*}
    D^0(z)+D^0(x)\xrightarrow{\cong} D^0(y).  
  \end{equation*}
  Moreover, the above isomorphism is natural with respect to isomorphisms of distinguished triangles because any isomorphic distinguished triangle fits into the $2$-cube
  \begin{equation*}
    \begin{tikzcd}
      x \arrow[r] & y \arrow[r] & z\\
      \ast \arrow[r] \arrow{u} & \ast \arrow[r] \arrow{u} & \ast \arrow{u}\\
      0 \arrow[r] \arrow{u} & 0 \arrow[r] \arrow{u} & 0 \arrow{u}
    \end{tikzcd}
  \end{equation*}
  and the image of this $2$-cube along with Lemma \ref{det_with_zero_as_an_input} gives the naturality condition. Note: the above diagram is a $2$-cube because it is an octahedron hence it admits a Verdier structure in an obvious way.  For the commutativity axiom consider:
  \begin{equation*}
    \begin{tikzcd}
      x \arrow{r} & x \arrow{r} & 0 \\
      x \arrow[r,"i"] \arrow{u} & x\oplus y \arrow[r,"p"] \arrow[u,"q"] & y \arrow{u} \\
      0 \arrow{r} \arrow{u} & y \arrow{r} \arrow[u,"j"] & y \arrow{u}
    \end{tikzcd}.
  \end{equation*}
  The above commutative diagram, whose rows and columns are distinguished triangles, is a $2$-cube because it admits the following Verdier structure:
  \begin{gather*}
    \begin{tikzcd}[ampersand replacement=\&]
      0 \arrow[r] \arrow[d, equal] \& x \arrow[r,"id"] \arrow[d,"i"] \& 
      x \arrow[d,"i"] \ar[r] \& 0 \ar[d,equal]\\
      0 \arrow[r] \& x\oplus y \arrow[r,"id"] \arrow[d,"p"] \& 
      x\oplus y \arrow[r] \arrow[d,"p"] \& 0\\
      \& y \arrow[r, equal] \arrow[d,"0"] \& y \arrow[d,"0"]\\
      \& \Sigma x \ar[r,"\id"] \& \Sigma x
    \end{tikzcd}
    \qquad
    \begin{tikzcd}[ampersand replacement=\&]
      0 \arrow[r] \arrow[d, equal] \& y \arrow[r,"id"] \arrow[d,"j"] \& 
      y \arrow[r] \arrow[d,"j"] \& 0 \arrow[d,equal] \\
      0 \arrow[r] \& x\oplus y \arrow[r,"id"] \arrow[d,"q"] \& 
      x\oplus y \arrow[r] \arrow[d,"q"] \& 0 \\
      \& x \arrow[r, equal] \ar[d,"0"] \& x \ar[d,"0"] \\
      \& \Sigma y \ar[r,"\id"] \& \Sigma y 
    \end{tikzcd}
    \\
    \begin{tikzcd}[ampersand replacement=\&]
      y \arrow[r,"j"] \arrow[d, equal] \& x\oplus y \arrow[r,"q"] \arrow[d,"p"] \& 
      x \arrow[r,"0"] \arrow[d] \& \Sigma y \ar[d,equal] \\
      y \arrow[r,"id"] \& y \arrow[r] \arrow[d,"0"] \& 
      0 \arrow[r] \arrow[d] \& \Sigma y \\
      \& \Sigma x \arrow[r, equal] \arrow[d,"id"] \& \Sigma x \arrow[d,"id"]\\
      \& \Sigma x \arrow[r,"id"] \& \Sigma x
    \end{tikzcd}
  \end{gather*}
  Thus the commutativity axiom is obtained using Lemma \ref{det_with_zero_as_an_input} and the image of aforementioned $2$-cube.  The octahedron axiom for determinant functors is obtained using Lemma \ref{det_with_zero_as_an_input} and the image of this $2$-cube representing the octahedron axiom for triangulated categories:
  \begin{equation*}
    \begin{tikzcd}
      0 \arrow{r} & x' \arrow{r} & x' \\
      x \arrow{r} \arrow{u} & z \arrow{r} \arrow{u} & y' \arrow{u} \\
      x \arrow{r} \arrow{u} & y \arrow{r} \arrow{u} & z' \arrow{u}
      \end{tikzcd}.
    \end{equation*}
  Next, we will show $D^0$ has compatibility between the variables: Suppose $x_i\to y_i\to z_i\to \Sigma x_i$ is a distinguished triangle in $\mathcal{T}_i$, $x_j\to y_j\to z_j\to \Sigma x_j$ is a distinguished triangle in $\mathcal{T}_j$, $f_i\colon u_i\to v_i$ is an isomorphism in $\mathcal{T}_i$ and $a_k$ are objects in in $\mathcal{T}_k$ for $k \in \{1,\dots,m\}-\{i, j\}$. We will continue to suppress the $a_k$ for notational convenience. The two triangle axiom follows from image of the $2$-cube
  \begin{equation*}
    \begin{tikzcd}
      (x_i,z_j) \arrow{r} & (y_i,z_j) \arrow{r} & (z_i,z_j) \\
      (x_i,y_j) \arrow{r} \arrow{u} & (y_i,y_j) \arrow{r} \arrow{u} & (z_i,y_j) \arrow{u} \\
      (x_i,x_j) \arrow{r} \arrow{u} & (y_i,x_j) \arrow{r} \arrow{u} & (z_i,x_j) \arrow{u}
    \end{tikzcd}
  \end{equation*}
  whereas the triangle-function axiom follows from Lemma \ref{det_with_zero_as_an_input} and the image of the $2$-cube
  \begin{equation*}
    \begin{tikzcd}
      (u_i,x_j) \arrow{r} & (u_i,y_j) \arrow{r} & (u_i,z_j) \\
      (v_i,x_j) \arrow{r} \arrow{u} & (v_i,y_j) \arrow{r} \arrow{u} & (v_i,z_j) \arrow{u} \\
      (0,x_j) \arrow{r} \arrow{u} & (0,y_j) \arrow{r} \arrow{u} & (0,z_j) \arrow{u}
    \end{tikzcd}
  \end{equation*}
  because $u_i\xrightarrow{f_i} v_i\to 0\to \Sigma u_i$ is a distinguished triangle.  In conclusion, $D^0$ is a multi-determinant functor.
\end{proof}

\begin{defn}
  \label{def_multifunctor_verdier}
  Suppose $F\colon\mathcal{T}_1\times\dots\times\mathcal{T}_n\to\mathcal{T}$ is a multiexact functor of triangulated categories.
  We say that $F$ \emph{admits a Verdier structure} if for all distinguished triangles $x_i\to y_i\to z_i\to \Sigma x_i$ in $\mathcal{T}_i$ and $x_j\to y_j\to z_j\to \Sigma x_j$ in $\mathcal{T}_j$, $i<j$, the induced $3\times 3$ diagram
  \begin{equation*}
    \begin{tikzcd}
      F( x_i, x_j) \arrow[r]\arrow[d] & F( y_i, x_j) \arrow[r]\arrow[d] & F( z_i, x_j) \arrow[r]\arrow[d] & \Sigma F( x_i, x_j)\arrow[d]\\
      F( x_i, y_j) \arrow[r]\arrow[d] & F( y_i, y_j) \arrow[r]\arrow[d] & F( z_i, y_j) \arrow[r]\arrow[d] & \Sigma F( x_i, y_j)\arrow[d]\\
      F( x_i, z_j) \arrow[r]\arrow[d] & F( y_i, z_j)\arrow[r]\arrow[d] & F( z_i, z_j)\arrow[r]\arrow[d]\arrow[dr,phantom,"\scalebox{0.75}{-1}"] & \Sigma F( x_i, z_j)\arrow[d]\\
      \Sigma F( x_i, x_j) \arrow[r] & \Sigma F( y_i, x_j) \arrow[r] & \Sigma F( z_i, x_j)\arrow[r] & \Sigma^2 F( x_i, x_j)
    \end{tikzcd}
  \end{equation*}
  admits a Verdier structure (in the sense of Definition \ref{Verdier_Structure}).
\end{defn}

\begin{cor}
  \label{exact_composed_with_det_is_a_multidet}
  Suppose $F\colon\mathcal{T}_1\times\dots\times\mathcal{T}_n\to\mathcal{T}$ is a multiexact functor of triangulated categories such that $F$ admits a Verdier structure. Then for every determinant functor $\GenericDet\colon\mathcal{T}\to\mathcal{P}$, the composition $\GenericDet\circ F$ is a multi-determinant functor.
\end{cor}

\begin{proof}
  Such a functor $F$ induces a functor $C^{\ast\leq 2}(\isom(\mathcal{T}_1),\dots,\isom(\mathcal{T}_n))\to C^{\ast\leq 2}(\isom(\mathcal{T}))$ that respects the face and degeneracy maps.  Then this result follows from Theorem \ref{defs_of_det_are_equivalent}.
\end{proof}

\subsection{Addition of Determinant Functors}

Addition of multi-determinant functors occurs pointwise, however, to make it more concrete, we will look at it through the lens of the cubical multi-determinant functors. First, we look at sums in $C^\ast(\mathcal{P})$.

\begin{defn}
  Let $\mathcal{P}$ be a Picard groupoid. The sum of two $n$-cubes $S,T\colon \objects(I^n)\to \mathcal{P}$ is an $n$-cube in $\mathcal{P}$ defined by
  \begin{equation*}
    (S+T)(a_1,\dots,a_n)\coloneqq S(a_1,\dots,a_n) + T(a_1,\dots,a_n).
  \end{equation*}
  Moreover, if the isomorphisms for $S$ are $f_1,\dots, f_n$ and the isomorphisms for $T$ are $g_1,\dots,g_n$, then the isomorphisms for $S+T$ are $(f_1+g_1)\circ\commutassoc,\dots,(f_n+g_n)\circ\commutassoc$.
\end{defn}

\begin{lemma}
  \label{sum_of_cubes_is_a_cube}
  The sum of two $n$-cubes (as defined above) is an $n$-cube.
\end{lemma}
\begin{proof}
  The only thing that needs to be shown is that a sum of $2$-cubes has the required additional structure but this follows from the additional structure on each $2$-cube and basic properties of the commutativity and associativity isomorphisms.
\end{proof}
In fact, more is true:
\begin{prop}
  For any $n$-cube $C \colon \objects(I^n)\to \mathcal{P}$ we have an isomorphism
  \begin{equation*}
    \partial^1_j C + \partial^{-1}_j C \xrightarrow{\cong} 
    \partial^0_j C\,,
  \end{equation*}
  for each direction $j=1,\dots,n$, where the isomorphism is determined by $\commutassoc$ or its inverse.
\end{prop}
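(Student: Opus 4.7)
The plan is to produce the desired morphism of $(n-1)$-cubes pointwise, using the $j$-th structure isomorphisms of $C$ itself, and then verify that this pointwise family assembles into a cube morphism by virtue of the single compatibility pentagon \eqref{eq:cube_pentagon} built into the definition of an $n$-cube.

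First, I would define the candidate isomorphism $\phi \colon \partial^1_j C + \partial^{-1}_j C \to \partial^0_j C$ pointwise by
\begin{equation*}
  \phi_a \;:=\; f_j(a_1,\dots,a_{n-1})\,,\qquad a=(a_1,\dots,a_{n-1})\in\objects(I^{n-1}),
\end{equation*}
where $f_j$ is the structure isomorphism of $C$ in the $j$-th direction from Definition \ref{def cube Picard}. Its source $C(\dots,1_j,\dots)\picadd C(\dots,-1_j,\dots)$ is exactly $(\partial^1_j C + \partial^{-1}_j C)(a)$, and its target $C(\dots,0_j,\dots)$ is exactly $(\partial^0_j C)(a)$, so each $\phi_a$ is an isomorphism of the correct type in $\mathcal{P}$.

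Next I would check that $\phi$ is a morphism of $(n-1)$-cubes. The sum $\partial^1_j C + \partial^{-1}_j C$ is an $(n-1)$-cube by Lemma \ref{sum of cubes is a cube}, and its structure isomorphism in any direction $i\neq j$ is, by definition of the sum of cubes, the composite $(f_i(1_j) \picadd f_i(-1_j)) \circ \commutassoc$; the structure isomorphism of $\partial^0_j C$ in the same direction is $f_i(0_j)$. The required naturality square for $\phi$ in direction $i$ therefore unfolds—after cancelling the constant indices—into exactly the commutative pentagon \eqref{eq:cube_pentagon} of Definition \ref{def cube Picard} for the pair $(i,j)$. Hence naturality holds for each $i\neq j$, and $\phi$ is a morphism of $(n-1)$-cubes.

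The main obstacle is purely bookkeeping: one must keep track of the bracketing of summands to match the source $(S(1_i,1_j)+S(-1_i,1_j))+(S(1_i,-1_j)+S(-1_i,-1_j))$ in the pentagon with the form $(S(1_i,1_j)+S(1_i,-1_j))+(S(-1_i,1_j)+S(-1_i,-1_j))$ dictated by the sum-of-cubes definition. The discrepancy is resolved by a single application of $\commutassoc$ (or $\commutassoc^{-1}$, depending on whether $i<j$ or $i>j$), which is the origin of the clause ``determined by $\commutassoc$ or its inverse'' in the statement. There is no higher coherence to check: by the preceding proposition, the triple-direction compatibilities for $n\geq 3$ hold automatically in any symmetric monoidal category, so verifying the pairwise pentagon suffices.
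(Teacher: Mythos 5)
Your argument is correct and is precisely the ``straightforward'' verification the paper omits: the components of the isomorphism are the structure maps $f_j$ of $C$, and their compatibility with the structure maps of $\partial^1_j C + \partial^{-1}_j C$ (which by the sum-of-cubes definition are $(f_i(1)\picadd f_i(-1))\circ\commutassoc$) in each transverse direction $i$ is exactly the pentagon \eqref{eq:cube_pentagon} for the pair $(i,j)$, up to one application of $\commutassoc^{\pm 1}$. Your closing remark that no higher coherences need checking is also consistent with the preceding proposition in the paper.
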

\begin{proof}
  Straightforward.
\end{proof}

\begin{defn}
  For two cubical multi-determinant functors $\det_1,\det_2\colon C^{\leq 2}(\isom(\mathcal{T}_1)\times\dots\times\isom(\mathcal{T}_n)) \to C^{\leq 2}(\mathcal{P})$, the sum $\det_1+\det_2$ is defined by what it does on cubes $S\in C^{\leq 2}(\isom(\mathcal{T}_1)\times\dots\times\isom(\mathcal{T}_n))$:
  \begin{equation*}
    ({\det}_1+{\det}_2)(S) \coloneqq {\det}_1(S)+{\det}_2(S)\,.
  \end{equation*}
\end{defn}

\begin{thm}
  \label{sum_of_dets_is_a_det}
  The sum of two multi-determinant functors is itself a multi-determinant functor.
\end{thm}

\begin{proof}
The result immediately follows from Lemma \ref{sum_of_cubes_is_a_cube} and the fact that both cubical multi-determinant functors respect the face and degeneracy maps.
\end{proof}

\section{Universal Multi-determinant Functors}
\label{section_universal_multideterminants}

In this section, we prove the existence of universal multi-determinant functors, or, equivalently, that for any $n$-tuple of triangulated categories, the $2$-functor sending a Picard groupoid $\mathcal{P}$ to the groupoid of multi-determinants $D\colon \mathcal{T}_1\times \dots \times \mathcal{T}_n\to \mathcal{P}$ is representable. Recall that we use $\det\colon \mathcal{T}\to V (\mathcal{T})$ to denote a universal determinant functor; for $n$ triangulated categories, we keep the same notation for the juxtaposition of $n$ copies of those determinant functors:
\begin{equation*}
  \det \colon 
  \mathcal{T}_1\times\dots \times \mathcal{T}_n \to
  V(\mathcal{T}_1)\times\dots \times V(\mathcal{T}_n)\,.
\end{equation*}
We contend $\det$, as just (re)defined, is a universal multi-determinant. More precisely:

\begin{thm}
  \label{main_theorem_about_multi-determinants}
  For every multi-determinant functor $D\colon\mathcal{T}_0\times\mathcal{T}_1\times\dots\times\mathcal{T}_n\to \mathcal{P}$ from triangulated categories $\mathcal{T}_i$ to a Picard groupoid $\mathcal{P}$, there exists a pair $(d,a)$ comprised of a morphism in the multicategory of Picard groupoids $d\colon V(\mathcal{T}_0)\times V(\mathcal{T}_1)\times\dots\times V(\mathcal{T}_n)\to \mathcal{P}$ and a natural transformation $a\colon d\circ\det\Rightarrow D$.  This pair $(d,a)$ is unique in the following way: For any other pair $(g,\beta)$ with morphism $g\colon V(\mathcal{T}_0)\times\dots\times V(\mathcal{T}_n)\to \mathcal{P}$ and  natural transformation $\beta\colon g\circ\det\Rightarrow D$, there exists a unique natural transformation $\gamma\colon d\Rightarrow g$ such that $a = \beta\circ(\gamma\ast\det)$.
\end{thm}

Recall that if $\mathcal{M}$ is a multicategory we denote by $\mathcal{M}^\times$ the corresponding monoidal category of tuples of objects of $\mathcal{M}$. The Theorem \ref{main_theorem_about_multi-determinants} can be rephrased as follows:

\begin{cor}
  \label{categorical_equiv_of_DET_and_PIC}
  For any triangulated categories $\mathcal{T}_0,\mathcal{T}_1,\dots,\mathcal{T}_n$ and Picard groupoid $\mathcal{P}$, the pullback by the universal determinant $f\circ\det \leftmapsto f\colon\det^\ast$ gives an equivalence of groupoids
  \begin{equation*}
    \Det(\mathcal{T}_0,\dots,\mathcal{T}_n;\mathcal{P}) \simeq \Pic^\times(V(\mathcal{T}_0),\dots,V(\mathcal{T}_n);\mathcal{P})\,.
  \end{equation*}
\end{cor}
We denote the other direction of the equivalence map (from the above Corollary) by $\barfunc\colon\dfunc\mapsto\overline{\dfunc}$.

\subsection{Proof of Theorem \ref{main_theorem_about_multi-determinants}}

We will prove this Theorem using induction and dedicate the majority of this section to proving the pieces needed for the induction step. As such, the assumptions and notation below are used for the entirety of this section. \bigskip

%\noindent\textsc{Notation and Induction Assumptions}
\paragraph{Notation and induction assumptions}

\begin{itemize}
  \item (Function) Fix $n$ and $D\colon\mathcal{T}_0\times\mathcal{T}_1\times\dots\times\mathcal{T}_n\to \mathcal{P}$, a multi-determinant functor from triangulated categories $\mathcal{T}_i$ to Picard groupoid $\mathcal{P}$. This implies, for all $x\in\mathcal{T}_0$, there is a multi-determinant functor $D_x\colon\mathcal{T}_1\times\dots\times\mathcal{T}_n\to \mathcal{P}$ given by $D_x(t_1,\dots,t_n) = D(x,t_1,\dots,t_n)$.

  \item (Induced Morphisms) Any multi-determinant functor $d\colon\mathcal{T}_1\times\dots\times\mathcal{T}_n\to \mathcal{P}$ induces a pair $(\overline{d},\alpha)$ comprised of a morphism in the multicategory of Picard groupoids $\overline{d}\colon V(\mathcal{T}_1)\times\dots\times V(\mathcal{T}_n)\to \mathcal{P}$ and a natural transformation $\alpha\colon \overline{d}\circ\det\Rightarrow d$. This pair $(\overline{d},\alpha)$ is unique in the following way:

  \item (Uniqueness) For any other pair $(g,\beta)$ with morphism in the multicategory of Picard groupoids $g\colon V(\mathcal{T}_1)\times\dots\times V(\mathcal{T}_n)\to \mathcal{P}$ and  natural transformation $\beta\colon g\circ\det\Rightarrow d$, there exists a unique natural transformation $\gamma\colon \overline{d}\Rightarrow g$ such that $\alpha = \beta\circ(\gamma\ast\det)$. Pictorially,
  \begin{equation*}
    \begin{tikzcd}
      \mathcal{T}_1\times\dots\times\mathcal{T}_n \arrow{dd}[swap]{\det} \arrow[drr,"d"{name=D}]&& \\
      && \mathcal{P}\\
      V(\mathcal{T}_1)\times\dots\times V(\mathcal{T}_n) \arrow[urr, "\overline{d}", swap] \arrow[Rightarrow, shorten <= 10pt, shorten >= 10pt, to=D, "\alpha"] &&
    \end{tikzcd}
    \quad = \quad
    \begin{tikzcd}
      \mathcal{T}_1\times\dots\times\mathcal{T}_n \arrow{dd}[swap]{\det} \arrow[drr,"d"{name=D}]&& \\
      && \mathcal{P}\\
      V(\mathcal{T}_1)\times\dots\times V(\mathcal{T}_n) \arrow[urr, bend right=30, "\overline{d}"{name=B}, swap] \arrow[Rightarrow, shorten <= 10pt, shorten >= 10pt, to=D, "\beta"] \arrow[urr, "g"{name=G}] && \arrow[Rightarrow, shorten <= 5pt, shorten >= 5pt, from=B, to=G, "\gamma"]
    \end{tikzcd}
  \end{equation*}
\end{itemize}

%%%%%%%%%%%%%%%%%%%%%%%%%%%%%%%%%%%%%%%%%%%%%%%%%%%%%%%%%%%%%%%%%%%%%%%%%%%%%%%%%%%%%%%%%%%%%%%%%%%%%%%%%%%%%%%%%%%%%%%%%%%%%%%%%

%%%%%%%%%%%%%%%%%%%%%%%%%%%%%%%%%%%%%%%%%%%%%%%%%%%%%%%%%%%%%%%%%%%%%%%%%%%%%%%%%%%%%%%%%%%%%%%%%%%%%%%%%%%%%%%%%%%%%%%%%%%%%%%%%

\begin{lemma}
  \label{morphs_between_dets_induce_morphs}
  Let $\mathcal{T}_1,\dots,\mathcal{T}_n$ be triangulated categories, $\mathcal{P}$ be a Picard groupoid, $d_i\colon\mathcal{T}_1\times\dots\times\mathcal{T}_n\to\mathcal{P}$ be multi-determinant functors for $i=1,2$, and $(\overline{d_i},\alpha_i)$ be the pair induced by $d_i$. If there exists a morphism between determinant functors $\theta\colon d_1\Rightarrow d_2$, then there exists a unique monoidal natural transformation $\beta\colon \overline{d_2}\Rightarrow \overline{d_1}$ such that $\alpha_2 = \theta\circ \alpha_1\circ (\beta\ast\det)$.
\end{lemma}

\begin{proof}
This is a direct implication of the ``Induced Morphism'' and ``Uniqueness Assumption''.
\end{proof}

Note: although the above lemma follows directly from our assumptions, we include it as a reminder of the arrow reversal for the function $\barfunc$.

\begin{prop}
  \label{induced_morphisms_respect_addition}
  For any $x,y\in\mathcal{T}_0$ we have $\overline{D_x+D_y} \xrightarrow{\cong} \overline{D_x}+\overline{D_y}$.
\end{prop}

\begin{proof}
  Let $(\overline{D_x},\alpha_x)$ and $(\overline{D_y},\alpha_y)$ be the induced pairs of $D_x$ and $D_y$, respectively.  By Theorem \ref{sum_of_dets_is_a_det}, $D_x+D_y$ is a determinant functor, which means it has an induced pair $(\overline{D_x+D_y},\beta)$. 

  Consider the pair $(\overline{D_x}+\overline{D_y},\alpha_x+\alpha_y)$ where all sums are defined by pointwise addition. Because $\mathcal{P}$ is a Picard groupoid and addition is defined pointwise,  $\overline{D_x}+\overline{D_y}\in\Pic(V(\mathcal{T}_1),\dots,V(\mathcal{T}_n);\mathcal{P})$. Notice $\alpha_x+\alpha_y$ is a natural transformation $(\overline{D_x}+\overline{D_y})\circ\det\Rightarrow D_x+D_y$; indeed, for any object $k$ in $\mathcal{T}_1\times\dots\times\mathcal{T}_n$ we have:
  \begin{align*}
    ((\overline{D_x}+\overline{D_y})\circ\det) (k) &= (\overline{D_x}+\overline{D_y})(\det(k))\\
    &\overset{\text{def}}{=} \overline{D_x}(\det(k)) + \overline{D_y}(\det(k))\\
    &= (\overline{D_x}\circ\det)(k) + (\overline{D_y}\circ\det)(k)\\
    &\xrightarrow{\alpha_x+\alpha_y} D_x(k)+D_y(k)\\
    &\overset{\text{def}}{=} (D_x+D_y)(k)
  \end{align*}
  and for any morphism $f\colon k\to m$ in $\mathcal{T}_1\times\dots\times\mathcal{T}_n$, the diagram
  \begin{equation*}
    \begin{adjustbox}{scale=0.85}
    \begin{tikzcd}[column sep=large]
      ((\overline{D_x}+\overline{D_y})\circ\det)(k) = 
      (\overline{D_x}\circ\det)(k) + (\overline{D_y}\circ\det)(k)
      \arrow[d, "\alpha_x(f)+\alpha_y(f)", swap] 
      \arrow[rr, "\alpha_x(k)+\alpha_y(k)"] 
      & & D_x(k)+D_y(k) = (D_x+D_y)(k) 
      \arrow[d, "\alpha_x(f)+\alpha_y(f)"]  \\
      ((\overline{D_x}+\overline{D_y})\circ\det)(m) = (\overline{D_x}\circ\det)(m) + (\overline{D_y} \circ\det)(m) 
      \arrow[rr, "\alpha_x(m)+\alpha_y(m)", swap] 
      & & D_x(m)+D_y(m) = (D_x+D_y)(m)
    \end{tikzcd}
    \end{adjustbox}
  \end{equation*}
  is commutative since $\alpha_x$ and $\alpha_y$ are natural transformations.

  Therefore, by the Uniqueness Assumption, there exists a unique natural transformation 
  \begin{equation*}
    \gamma\colon \overline{D_x+D_y}\Rightarrow \overline{D_x}+\overline{D_y}
  \end{equation*} 
  such that $\beta = (\alpha_x+\alpha_y)\circ(\gamma\ast\det)$. Since $\gamma$ is a natural transformation in $\Pic$, then it is an isomorphism.
\end{proof}

\begin{prop}
  \label{arrow_in_DET_induces_arrow_in_Pic(V,P)}
  Every morphism $f\colon d_1\to d_2$ in $\Det(\mathcal{T}_1,\dots,\mathcal{T}_n;\mathcal{P})$ induces a unique (in the sense of the Uniqueness Assumption) arrow-reversing monoidal morphism $\overline{f}\colon \overline{d_2}\to \overline{d_1}$ in the Picard groupoid $\Pic(V(\mathcal{T}_1),\dots,V(\mathcal{T}_n);\mathcal{P})$. Moreover, if $(\overline{d_1},\alpha_1)$ and $(\overline{d_2},\alpha_2)$ be the induced pairs of $d_1$ and $d_2$ respectively, then $\alpha_2 = f\circ\alpha_1\circ (\overline{f}\ast\det)$.
\end{prop}

\begin{proof}
  This follows from the Uniqueness Assumption. Indeed, use the natural transformations $\alpha_2\colon \overline{d_2}\circ\det\Rightarrow d_2$ and $f\circ\alpha_1\colon \overline{d_1}\circ\det\Rightarrow d_2$. Then by the Uniqueness Assumption on $(\overline{d_2},\alpha_2)$, there exists a unique natural transformation $\overline{f}\colon \overline{d_2}\Rightarrow \overline{d_1}$ such that $\alpha_2 = f\circ\alpha_1\circ (\overline{f}\ast\det)$.
\end{proof}

\begin{prop}
  \label{composition_in_DET_induces_composition_in_Pic(V,P)}
  Every composable pair of morphisms $g\circ f\colon x\to y\to z$ in $\Det(\mathcal{T}_1,\dots,\mathcal{T}_n;\mathcal{P})$ induces composition in $\Pic(V(\mathcal{T}_1),\dots,V(\mathcal{T}_n);\mathcal{P})$, in other words $\overline{g\circ f} = \overline{f} \circ \overline{g}$.
\end{prop}

\begin{proof}
  Let $(\overline{x},\alpha_x)$, $(\overline{y},\alpha_y)$ and $(\overline{z},\alpha_z)$ be the induced pairs of $x$, $y$, and $z$ respectively. Using the morphism $g\circ f\colon x\to z$, Proposition \ref{arrow_in_DET_induces_arrow_in_Pic(V,P)} tells us there exists a unique morphism $\overline{g\circ f}\colon \overline{z}\to \overline{x}$ such that

  \begin{equation*}
    \alpha_z = (g\circ f)\circ\alpha_x\circ (\overline{g\circ f}\ast\det)\,.
  \end{equation*}
  However, there is a ``second'' arrow $\overline{z}\to \overline{x}$ and ``second'' way to rewrite $\alpha_z$. Indeed, by Proposition \ref{arrow_in_DET_induces_arrow_in_Pic(V,P)}, there exists unique morphisms $\overline{f}\colon \overline{y}\to \overline{x}$ and $\overline{g}\colon \overline{z}\to \overline{y}$, such that we can rewrite $\alpha_y = f\circ\alpha_x\circ (\overline{f}\ast\det)$ and $\alpha_z = g\circ\alpha_y\circ (\overline{g}\ast\det)$; thus, by plugging in the equation for $\alpha_y$, we get a ``second'' way to rewrite $\alpha_z$:
  \begin{align*}
    \alpha_z &= g\circ(f\circ\alpha_x\circ (\overline{f}\ast\det))\circ (\overline{g}\ast\det)\\
    &= (g\circ f)\circ\alpha_x\circ ((\overline{f}\circ\overline{g})\ast\det).
  \end{align*}
  for a ``second'' morphism $(\overline{f}\circ\overline{g})\colon \overline{z}\to\overline{x}$.  Now the uniqueness of $\overline{g\circ f}$ forces the equality $\overline{g\circ f} = \overline{f} \circ \overline{g}$.
\end{proof}

\begin{cor}
  \label{diagram_in_DET_induces_diagram_in_Pic(V,P)}
  Any commutative diagram in $\Det(\mathcal{T}_1,\dots,\mathcal{T}_n;\mathcal{P})$ induces a commutative diagram (with reversed arrows) in $\Pic(V(\mathcal{T}_1),\dots,V(\mathcal{T}_n);\mathcal{P})$ using the map $f\mapsto \overline{f}$.
\end{cor}

\begin{proof}
  This is a direct result of Propositions \ref{arrow_in_DET_induces_arrow_in_Pic(V,P)} and \ref{composition_in_DET_induces_composition_in_Pic(V,P)}.
\end{proof}

\begin{prop}
  \label{x_mapsto_D_x_is_a_determinant_functor}
  The arrow $\mathcal{T}_0\to\Det(\mathcal{T}_1,\dots,\mathcal{T}_n;\mathcal{P})$ given by $x\mapsto D_x$ is a determinant functor.
\end{prop}

\begin{proof}
  Since $D$ is a multi-morphism compatibly natural in all variables (specifically, it has the properties of a bifunctor), then for any $f\colon x\to y$ in the first variable, the arrows 
  \begin{equation*}
    D_f(z_1,\dots,z_n)\colon D(x,z_1,\dots,z_n)\to D(y,z_1,\dots,z_n)
  \end{equation*}
  define a natural transformation $D_f\colon D_x\Rightarrow D_y$. Moreover, $D_f$ is compatible with the additivity data by the Triangle-Function Axiom of multi-determinant functors. Therefore $f$ induces a morphism of multi-determinant functors $D_f\colon D_x\Rightarrow D_y$.

  For any distinguished triangle $\Delta\colon x\to y\to z\to \Sigma x$ in $\mathcal{T}_0$, there is a natural transformation $\beta\colon D_z+D_x\Rightarrow D_y$ since $D$ is a determinant functor in its first variable and by the Triangle-Function Axiom. Moreover, $\beta$ is compatible with the additivity data by the Two-Triangle Axiom of multi-determinant functors. Therefore the triangle $\Delta$ induces a morphism of multi-determinant functors $D_z+D_x\Rightarrow D_y$.

  To see that $x\mapsto D_x$ induces the Commutativity and Octahedron Axioms, notice for any fixed $(x_1,\dots,x_n)\in\mathcal{T}_1\times\dots\times\mathcal{T}_n$ all of the requirements for determinant functors are satisfied because $D$ is a multi-determinant functor and hence a determinant functor in each variable. For any morphisms or triangles in $\isom(\mathcal{T}_1)\times\dots\times\isom(\mathcal{T}_n)$, the Two Triangles Axiom and Triangle-Function Axiom of multi-determinant functors give the necessary compatibility.
\end{proof}

\begin{cor}
  \label{x_mapsto_overline{D_x}_is_a_determinant_functor}
  The arrow $\mathcal{T}_0\to\Pic(V(\mathcal{T}_1),\dots,V(\mathcal{T}_n);\mathcal{P})$ given by $x\mapsto \overline{D_x}$ is a determinant functor.
\end{cor}

\begin{proof}
  The arrow $x\mapsto \overline{D_x}$ is a composition of two arrows. The first arrow $x\mapsto D_x$, is a determinant functor by Proposition \ref{x_mapsto_D_x_is_a_determinant_functor} and is followed by the arrow $f\mapsto \overline{f}$, which is a (contravariant) functor by Lemma \ref{morphs_between_dets_induce_morphs} and Proposition \ref{composition_in_DET_induces_composition_in_Pic(V,P)}. By Propositions \ref{induced_morphisms_respect_addition} and \ref{arrow_in_DET_induces_arrow_in_Pic(V,P)}, for any distinguished triangle $x\to y\to z\to \Sigma x$ in $\mathcal{T}_0$, $\overline{D_z}+\overline{D_x}\overset{\cong}{\leftarrow} \overline{D_z+D_x}\leftarrow \overline{D_y}$. Moreover, since $x\mapsto D_x$ satisfies the Commutativity and Octahdron Axioms, then by Corollary \ref{diagram_in_DET_induces_diagram_in_Pic(V,P)} $x\mapsto\overline{D_x}$ does too.
\end{proof}

\begin{proof}[End of proof of Theorem \ref{main_theorem_about_multi-determinants}]
  When $n=0$, we are looking at a determinant functor $\mathcal{T}_0\to\mathcal{P}$. It is a known fact that such a determinant functor induces the desired unique pair (for example, see \cite{muro2010determinant}).

  Now assume the Theorem holds for $n$ triangulated categories, i.e. assume the induction assumptions from the beginning of the section hold. By Corollary \ref{x_mapsto_overline{D_x}_is_a_determinant_functor}, the arrow
  \begin{equation*}
    \Dbar\colon \mathcal{T}_0\to \Pic(V(\mathcal{T}_1),\dots,V(\mathcal{T}_n);\mathcal{P})
  \end{equation*}
  given by $\Dbar\colon x\mapsto \overline{D_x}$ is a determinant functor. Therefore, there exists an induced functor $\overline{\Dbar}\colon V(\mathcal{T}_0)\to\Pic(V(\mathcal{T}_1),\dots, V(\mathcal{T}_n);\mathcal{P})$ of Picard groupoids along with a natural transformation $\overline{\alpha}\colon \overline{\Dbar}\circ\det\Rightarrow \Dbar$ that is unique (in the sense of the Uniqueness Assumption). 

  Since the multicategory of Picard groupoids is closed, then $(\overline{\Dbar},\overline{\alpha})$ induces maps $(d,a)$ where $d\in \Pic(V(\mathcal{T}_0),\dots,V(\mathcal{T}_n);\mathcal{P})$ and $a\colon d\circ\det\Rightarrow D$, which also has the uniqueness assumption.
\end{proof}

\subsection{Functoriality of the Equivalence} %Inducing Structure from TRCAT}
\label{sec:funct_equiv}

Recall that $\TrCat$ denotes the multicategory (in the enriched sense) of (small) triangulated categories. Also recall Definition \ref{def_multifunctor_verdier}, concerning multifunctors with Verdier structure. We slightly restrict $\TrCat$ to only consider those multi morphisms:
\begin{defn}
  \label{def_trcat_verdier}
  The multicategory $\TrCat_{\mathcal{V}}$ is the sub-multicategory of $\TrCat$ in which all functors admit a Verdier structure. Analogously for the induced monoidal category $\TrCat^\times_{\mathcal{V}}$.
\end{defn}
  
\begin{thm}
  \label{Det_is_functorial_in_T}
  For any Picard groupoid $\mathcal{P}$ and all $n$-tuples $\underline{\mathcal{T}} =(\mathcal{T}_1,\dots, \mathcal{T}_n)$, for all $n\geq 0$, the groupoids $\Det(\underline{\mathcal{T}},\mathcal{P})$ determine a 2-functor
  \begin{equation*}
    \Det(-;\mathcal{P})\colon (\TrCat^{\times}_{\mathcal{V}})^{\opposite}\to \Pic\,.
  \end{equation*}
\end{thm}

\begin{proof}
  The codomain of $\Det$ is automatically the category of groupoids. Since there is an equivalence of 2-categories $\Det(\mathcal{T}_1,\dots,\mathcal{T}_n;\mathcal{P}) \simeq \Pic^\times(V(\mathcal{T}_1),\dots,V(\mathcal{T}_n);\mathcal{P})$ and the category of Picard groupoids is closed, then the codomain is actually the category of Picard groupoids.

  Using the same reasoning as Corollary \ref{exact_composed_with_det_is_a_multidet}, for any triangulated categories $\mathcal{T}_1,\dots,\mathcal{T}_n$ and any multiexact morphisms
  $F_i\colon \mathcal{S}^i_1\times\dots\times \mathcal{S}^i_{m_i}\to \mathcal{T}_i$ that admit a Verdier structure, the morphism $(F_1,\dots,F_n)$ induces a map
  \begin{equation*}
    (F_1,\dots,F_n)^\ast\colon \Det(\mathcal{T}_1,\dots,\mathcal{T}_n;\mathcal{P})\to \Det(\mathcal{S}^1_1,\dots,\mathcal{S}^n_{m_n};\mathcal{P}),\, D\mapsto D\circ(F_1,\dots,F_n)\,.
  \end{equation*}
  Any morphism of multi-determinant functors $\eta\colon D\to E$ gives a morphism of determinant functors $\eta\ast(F_1,\dots,F_n)\colon D\circ(F_1,\dots,F_n)\to E\circ (F_1,\dots,F_n)$. Moreover, the identity and composition is clearly preserved.
\end{proof}

\begin{thm}
  \label{equiv_of_categories_is_functorial_in_T}
  For any Picard groupoid $\mathcal{P}$, the equivalence
  \begin{equation*}
    \Det(\mathcal{T}_1,\dots,\mathcal{T}_n;\mathcal{P}) \simeq \Pic^\times(V(\mathcal{T}_1),\dots,V(\mathcal{T}_n);\mathcal{P})
  \end{equation*}
  is functorial in $\mathcal{T}_1,\dots,\mathcal{T}_n\in \TrCat^{\times}_{\mathcal{V}}$.
\end{thm}
\begin{note}
  For any determinant functor $d$, we will use $\overline{d}$ to denote the induced multiexact functor between Picard groupoids guaranteed by Theorem \ref{main_theorem_about_multi-determinants}.
\end{note}
\begin{proof}
  Let $\mathcal{T}_1,\dots,\mathcal{T}_n$ be triangulated categories and  $F_i\colon \mathcal{S}^i_1\times\dots\times \mathcal{S}^i_{m_i}\to \mathcal{T}_i$ be multiexact functors that admit a Verdier structure. For a determinant functor $D \colon \mathcal{T}_1\times\dots\times\mathcal{T}_n \to \mathcal{P}$ and morphism $f \colon (V(\mathcal{T}_1),\dots,V(\mathcal{T}_n)) \to \mathcal{P}$, we will use $(F_1,\dots,F_n)^\ast$ to denote the map $(F_1,\dots,F_n)^\ast(D) = D\circ (F_1,\dots,F_n)$, and $\overline{(F_1,\dots,F_n)^\ast}$ to denote the map $\overline{(F_1,\dots,F_n)^\ast}(f) = f\circ(\overline{\det\circ F_1},\dots,\overline{\det\circ F_n})$. 

  To see that the equivalence preserves morphisms, i.e.\ that the diagram
  \begin{equation*}
    \begin{tikzcd}
      \Det(\mathcal{T}_1,\dots,\mathcal{T}_n;\mathcal{P}) \arrow[r, "\simeq"] \arrow[d, "{(F_1,\dots,F_n)^\ast}", swap]
      & \Pic^\times(V(\mathcal{T}_1),\dots,V(\mathcal{T}_n);\mathcal{P}) \arrow[d, "\overline{(F_1,\dots,F_n)^\ast}"]\\
      \Det(\mathcal{S}^1_1,\dots,\mathcal{S}^n_{m_n};\mathcal{P}) \arrow[r, "\simeq"]
      & \Pic^\times(V(\mathcal{S}^1_1),\dots,V(\mathcal{S}^n_{m_n});\mathcal{P})
    \end{tikzcd}
  \end{equation*}
  commutes up to a natural transformation, we will show that for every multi-determinant functor $D\colon \mathcal{T}_1\times\dots\times \mathcal{T}_n\to \mathcal{P}$ there exists a natural transformation $\beta\colon \overline{D}\circ(\overline{\det\circ F_1},\dots,\overline{\det\circ F_n})\circ\det\Rightarrow D\circ(F_1,\dots,F_n)$. This natural transformation along with the uniqueness of Theorem \ref{main_theorem_about_multi-determinants} will induce the desired natural transformation $\overline{D\circ(F_1,\dots,F_n)} \Rightarrow \overline{D}\circ(\overline{\det\circ F_1},\dots,\overline{\det\circ F_n})$.

  Let $\alpha_i$ be the natural transformation $\overline{\det\circ F_i}\circ\det\Rightarrow \det\circ F_i$, and $\tau$ the natural transformation $\overline{D}\circ\det\to D$, both of which are guaranteed to exist by Theorem \ref{main_theorem_about_multi-determinants}. Then we can take $\beta$ to be $$(\tau\ast (F_1,\dots,F_n))\circ(\overline{D}\ast(\alpha_1,\dots,\alpha_n)).$$

  Moreover, for any multiexact functors $G_{i,j}\colon \mathcal{R}^{i,j}_1\times\dots\times\mathcal{R}^{i,j}_{p_j}\to \mathcal{S}^i_j$ that admits a Verdier structure with natural transformations $\gamma_{i,j}\colon \overline{\det\circ G_{i,j}}\circ\det\Rightarrow \det\circ G_{i,j}$, the composition $(F_1,\dots,F_n)\circ(G_{1,1},\dots,G_{n,m_n})$ is preserved. Indeed, use the natural transformation
  \begin{align*}
    (\tau\ast((F_1,\dots,F_n)\circ(G_{1,1},\dots,G_{n,m_n})))
    &\circ (\overline{D}\ast(\alpha_1,\dots,\alpha_n)\ast(G_{1,1},\dots,G_{n,m_n}))\\
    &\circ (\overline{D}\ast(\overline{\det\circ F_1},\dots,\overline{\det\circ F_n})\ast (\gamma_{1,1},\dots\gamma_{n,m_n}))
  \end{align*}
  to induce the desired natural transformation. Lastly, we note that the identity is clearly preserved.
\end{proof}

\begin{prop}
  \label{2-morphism_in_TRCAT_induce_morphisms_in_DET}
  For any triangulated categories $\mathcal{T}_0,\mathcal{T}_1,\dots,\mathcal{T}_n$ and Picard groupoid $\mathcal{P}$, if $D\colon \mathcal{T}_0\to\mathcal{P}$ is a determinant functor, then any natural isomorphism $\alpha\colon F_1\to F_2$ between multiexact functors that admit Verdier structures $F_i\colon \mathcal{T}_1\times\dots\times\mathcal{T}_n\to \mathcal{T}_0$ induces a morphism of multi-determinant functors $D\ast\alpha\colon D\circ F_1\to D\circ F_2$.
\end{prop}

\begin{proof}
  By Corollary \ref{exact_composed_with_det_is_a_multidet}, $D\circ F_i$ is a multi-determinant functor. Moreover, $D\ast\alpha$ is automatically a natural transformation. Lastly, since every distinguished triangle $\Delta_i$ in $\mathcal{T}_i$ yields a natural isomorphism $\alpha(\Delta_i)\colon F_1(\Delta_i)\to F_2(\Delta_i)$ between distinguished triangles and $D$ is a determinant functor, then $D\ast\alpha$ is compatible with the additivity data.
\end{proof}

\begin{thm}
  \label{induce_structure_from_TRCAT}
  Let $\mathcal{T}_0,\mathcal{T}_1,\dots,\mathcal{T}_n$ be triangulated categories and $\mathcal{P}$ be a Picard groupoid. Suppose $D\colon \mathcal{T}_0\to\mathcal{P}$ is a determinant functor and $\alpha\colon F_1\to F_2$ is a natural transformation between multiexact functors that admit Verdier structures $F_i\colon \mathcal{T}_1\times\dots\times\mathcal{T}_n\to \mathcal{T}_0$. Then there is an induced natural transformation $\overline{D\ast\alpha}\colon \overline{D\circ F_2}\to \overline{D\circ F_1}$ between multiexact morphisms $\overline{D\circ F_i}\colon V(\mathcal{T}_1)\times\dots\times V(\mathcal{T}_n)\to\mathcal{P}$.
\end{thm}

\begin{proof}
  This follows immediately from Proposition \ref{2-morphism_in_TRCAT_induce_morphisms_in_DET} and Corollary \ref{categorical_equiv_of_DET_and_PIC}.
\end{proof}

\section{Determinant Functors on Tensor Triangulated Categories}
\label{sec:det_ttt}

\begin{defn}
  \label{tt-definition}
  A tensor triangulated category is a triple $(\mathcal{T},\otimes,\mathbf{1})$ consisting of a triangulated category $\mathcal{T}$, a monoidal structure $\otimes \colon \mathcal{T}\times \mathcal{T} \to \mathcal{T}$, with unit object $\mathbf{1}$, such that tensor is a biexact functor in the sense specified in Definition \ref{def:multiexact_for_triangulated_categories}.
\end{defn}

\begin{rmk}
  \label{commutativity-not-needed}
  We have not included the symmetry condition, which is usually assumed (see, e.g.\ \cite{balmer2010ttg}), in the definition. We do not emphasize the commutativity of the tensor operation below, but note that Definition~\ref{tt-definition} coincides with Balmer's if the monoidal structure is indeed symmetric.
\end{rmk}

\begin{rmk}
  A tensor triangulated category is like a \emph{2-rig,} that is, a category with two monoidal structures satisfying the categorified version of the axioms of a rig, namely a ring without negative elements. This is because the multiplicative structure (given by the tensor $\otimes$) distributes over the additive one (given by the coproduct $\oplus$) due to the bi-exactness of $\otimes$ and the properties of the triangulation. For instance, for any objects $x,y$ and $z$, consider the diagram
  \begin{equation*}
    \begin{tikzcd}
      x\otimes z \arrow[r] \arrow[d,equal] & 
      (x\otimes z)\oplus (y\otimes z) \arrow[r] \arrow[d,dotted,"\exists","\cong"'] &
      y\otimes z \arrow[r] \arrow[d,equal] & 
      \Sigma x\otimes z \arrow[d,"\cong"]\\
      x\otimes z \arrow[r] & (x\oplus y)\otimes z \arrow[r] & y \otimes z \arrow[r] & \Sigma x\otimes z
    \end{tikzcd}
  \end{equation*}
  Where the bottom triangle is distinguished because so is $x \to x \oplus y \to y \to \Sigma x$ and $\otimes$ is biexact. Then the existence of the ``distributor'' isomorphism (the second vertical arrow) follows from axioms TR3 and TR4. The other structural morphisms of a 2-rig follow from similar considerations, but there is not a canonical choice for these morphisms. 
\end{rmk}

\begin{defn}
  The tensor $\otimes$ in a tensor triangulated category $\mathcal{T}$ \emph{admits a Verdier structure} if for all distinguished triangles $x\overset{f}{\to} y\overset{g}{\to} z\overset{h}{\to} \Sigma x$ and $u\overset{p}{\to} v\overset{q}{\to} w\overset{r}{\to} \Sigma u$ in $\mathcal{T}$, the $3\times 3$ diagram
  \begin{equation*}
    \begin{tikzcd}
      x\otimes u\arrow[r,"f\otimes id"]\arrow[d,"id \otimes p"] & 
      y\otimes u \arrow[r,"g\otimes id"]\arrow[d,"id \otimes p"] & 
      z\otimes u \arrow[r,"h\otimes id"]\arrow[d,"id \otimes p"] & 
      \Sigma x\otimes x\arrow[d,"id \otimes \Sigma f"]\\
      x \otimes v\arrow[r,"f\otimes id"]\arrow[d,"id \otimes q"] & 
      y\otimes v\arrow[r,"g \otimes id"]\arrow[d,"id \otimes q"] & 
      z\otimes v\arrow[r,"h\otimes id"]\arrow[d,"id \otimes q"] & 
      \Sigma x\otimes v\arrow[d,"id \otimes\Sigma q"]\\
      x \otimes w\arrow[r,"f \otimes id"]\arrow[d,"id \otimes r"] & 
      y \otimes w\arrow[r,"g\otimes id"]\arrow[d,"id \otimes r"] & 
      z\otimes w\arrow[r,"h \otimes id"]\arrow[d,"id \otimes r"]\arrow[dr,phantom,"\scalebox{0.75}{-1}"] & 
      \Sigma x\otimes w\arrow[d,"id \otimes \Sigma r"]\\
      \Sigma x\otimes u\arrow[r,"\Sigma f\otimes id"] & 
      \Sigma y\otimes u\arrow[r,"\Sigma g\otimes id"] & 
      \Sigma z\otimes u\arrow[r,"\Sigma h\otimes id"] & \Sigma^2 x\otimes u
    \end{tikzcd}
  \end{equation*}
  admits a Verdier structure (in the sense of Definition \ref{Verdier_Structure}).
\end{defn}

\begin{rmk}
The axioms considered by Keller and Neeman in \cite[Definition 3.1]{keller2002connection} imply that their tensor product admits a Verdier structure. (They call such a tensor triangulated category ``decent.'') These axioms were previously also considered by P.\ May \cite{may2001ttt}. These extra axioms are not included in Balmer's definition of tensor triangulated category \cite{balmer2010ttg,balmer2010sss,balmer2005prime}, nor in Definition~\ref{tt-definition}.
From now on, we assume the tensor structure satisfies this additional property; however, in order to not cause confusion, we do not modify the definition, but rather add this explicit requirement to the statements when needed. (Cf.\ Theorem~\ref{cat_ring_for_tensor_triangulated} below.)
\end{rmk}

\begin{defn}
  We call a Picard groupoid $(\mathcal{P},\picadd,0)$ a \emph{categorical ring} when there exists a second monoidal structure in the form of a multiexact unital associative map $\picmult\colon\mathcal{P}\times\mathcal{P}\to\mathcal{P}$.
\end{defn}

\begin{rmk}
  The multiexactness of the second monoidal structure refers to the biexactness of the operation $\picmult \colon \mathcal{P}\times \mathcal{P} \to \mathcal{P}$ as well as to the composite operations $\picmult \circ (\picmult \times \id_\mathcal{P})$, $\picmult \circ (\id_\mathcal{P} \times \picmult)$, etc.\ which intervene in the axioms. These can be expressed in ``unbiased form'' by introducing appropriate higher operations $\picmult_n\colon \mathcal{P}^{\times_n} \to \mathcal{P}$ which will be assumed to be multiexact relative to the underlying Picard groupoid structure $(\mathcal{P},\picadd,0)$ (see \cite{aldrovandi2015biex}).
\end{rmk}

\begin{rmk}
  \label{pi_0_and_pi_1_of_categorical_rings}
  It is well known (see e.g.\ \cite{MR2369166}) that if $\mathcal{P}$ is a categorical ring, then $\pi_0(\mathcal{P})$ is a ring and $\pi_1(\mathcal{P})$ is a $\pi_0$-bimodule. 
  
  To be more specific, if $(\mathcal{P},\picadd, 0, \picmult, 1)$ is a categorical ring, then $\pi_0(\mathcal{P})$ is the abelian group $\objects(\mathcal{P})/\sim$ where $x\sim y$ when $x$ and $y$ are isomorphic, and the operations are $[x]+[y]\coloneqq [x\picadd y]$, and $[x]\cdot[y]\coloneqq [x\picmult y]$. 
  
  Additionally, $\pi_1(\mathcal{P})$ is $\autom_{\mathcal{P}}(0)$. This is an abelian group under composition and has the following actions: For $f\in \autom(0)$, define the left action $A\action f$ by
  \begin{equation*}
    \begin{tikzcd}
      A\picmult 0 \arrow[r, "_{A}\sigma"] \arrow[d, "\identity_A\picmult f", swap] & 0 \arrow[d, dashed, "A\action f"] \\
      A\picmult 0 \arrow[r, "_{A}\sigma", swap] & 0 \arrow[ul, phantom, "\circ"]
    \end{tikzcd}
  \end{equation*}
  where ${}_A\sigma\colon A\picmult 0\to 0$ is the composition
  \begin{equation*}
    \begin{split}
      A\picmult 0 \to (A\picmult 0)\picadd 0 &\to (A\picmult 0)\picadd [(A\picmult 0)\picadd(-(A\picmult 0))]\to [(A\picmult 0)\picadd (A\picmult 0)] \picadd (-(A\picmult 0))\to \\
      &\to [A\picmult (0\picadd 0)]\picadd (-(A\picmult 0)) \to (A\picmult 0)\picadd (-(A\picmult 0)) \to 0.
    \end{split}
  \end{equation*}
  Similarly let $\sigma_A\colon 0\picmult A \to 0$ be the analogous composition; then the right action $f\action A$ is similarly defined using $\sigma_A$.
\end{rmk}

\begin{thm}
  \label{cat_ring_for_tensor_triangulated}
  If $\mathcal{T}$ is a tensor triangulated category whose tensor admits a Verdier structure, then the universal Picard groupoid $V(\mathcal{T})$ of virtual objects is a categorical ring.
\end{thm}

\begin{proof}
  Let $\mathcal{T}$ be a tensor triangulated category where $\otimes\colon\mathcal{T}\times\mathcal{T}\to\mathcal{T}$ is the tensor structure that admits a Verdier structure. In particular, $\otimes$ is unital associative and biexact.
  %and compatible with the monoidal structure from the triangulated category.
  Let $\picmult\colon V(\mathcal{T})\times V(\mathcal{T})\to V(\mathcal{T})$ be induced from $\otimes$ (as in Theorem \ref{main_theorem_about_multi-determinants}). Since $\otimes$ is multiexact and admits a Verdier structure, then $\otimes\circ \det$ is a multi-determinant functor by Corollary \ref{exact_composed_with_det_is_a_multidet}. Hence $\picmult$ is automatically multiexact.

  Notice that all unital and associative properties from $\otimes$ can be phrased in the form
  \begin{equation*}
    \begin{tikzcd}
      \mathcal{T}\times\dots\times\mathcal{T} \arrow[rr, bend left=40, ""{name=A, below}] \arrow[rr, bend right=40, ""{name=B}] & & \mathcal{T} \arrow[Rightarrow, from=A, to=B, "\cong"]
    \end{tikzcd}
  \end{equation*}
  for some functors that admit Verdier structures and a natural isomorphism. These natural transformations transfer to natural transformations between multiexact functors in the multicategory of Picard groupoids (Proposition \ref{2-morphism_in_TRCAT_induce_morphisms_in_DET} and Theorem \ref{induce_structure_from_TRCAT}). Moreover, their images are the corresponding unital and associative identities for $\picmult$ because $\Det$ and the equivalence of categories between $\Det$ and $\Pic$ are functorial in $\mathcal{T}$ (Theorems \ref{Det_is_functorial_in_T} and \ref{equiv_of_categories_is_functorial_in_T}).
\end{proof}

\begin{rmk}
  If $\mathbf{1}$ is the unit of a tensor triangulated category $\mathcal{T}$, then $\det(\mathbf{1})$ is the unit of the categorical ring $V(\mathcal{T})$.
\end{rmk}

\begin{cor}
  If $\mathcal{T}$ is a tensor triangulated category whose tensor admits a Verdier structure, then $K_0(\mathcal{T})$ is a ring and $K_1(\mathcal{T}) \coloneqq \pi_1(V(\mathcal{T}))$ is a $K_0$ bi-module 
\end{cor}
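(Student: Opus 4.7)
The plan is to deduce this corollary essentially as a bookkeeping consequence of the previous theorem together with the standard algebraic facts about categorical rings recalled in Remark \ref{pi_0 and pi_1 of categorical rings}. The strategy is to pass from the 2-categorical statement "$V(\mathcal{T})$ is a categorical ring" to an assertion about ordinary abelian groups by applying $\pi_0$ and $\pi_1$.

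First, I would apply Theorem \ref{cat ring for tensor triangulated} to obtain a biexact, unital, associative multiplication $\picmult \colon V(\mathcal{T}) \times V(\mathcal{T}) \to V(\mathcal{T})$ with unit $\det(\mathbf{1})$, making $(V(\mathcal{T}), \picadd, 0, \picmult, \det(\mathbf{1}))$ into a categorical ring. Next, I would recall from the introduction that $K_0(\mathcal{T}) := \pi_0(V(\mathcal{T}))$ is the Grothendieck group of $\mathcal{T}$, while $K_1(\mathcal{T}) := \pi_1(V(\mathcal{T})) = \Aut_{V(\mathcal{T})}(0)$ by definition.

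Then the result follows by directly invoking Remark \ref{pi_0 and pi_1 of categorical rings}: the product $[x] \cdot [y] := [x \picmult y]$ is well-defined on isomorphism classes (since $\picmult$ is a functor) and is associative and unital on the nose (since the associator and unit isomorphisms exist in $V(\mathcal{T})$); distributivity at the level of $\pi_0$ follows from biexactness of $\picmult$, which guarantees natural isomorphisms $(x \picadd x') \picmult y \cong (x \picmult y) \picadd (x' \picmult y)$ and similarly on the right. This endows $K_0(\mathcal{T})$ with a ring structure. For the bimodule statement, I would use the morphisms ${}_A\sigma$ and $\sigma_A$ exhibited in Remark \ref{pi_0 and pi_1 of categorical rings} to define the left and right actions $A \action f$ and $f \action A$ on $\pi_1(V(\mathcal{T}))$, and observe that these descend to actions of $\pi_0(V(\mathcal{T}))$ because isomorphic objects $A \cong A'$ induce the same action on $\Aut_{V(\mathcal{T})}(0)$ by functoriality of $\picmult$.

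There is no genuine obstacle here: the entire content is packaged in Theorem \ref{cat ring for tensor triangulated} together with Remark \ref{pi_0 and pi_1 of categorical rings}, so the proof reduces to citing these and noting the matching of notation $K_i(\mathcal{T}) = \pi_i(V(\mathcal{T}))$. The only mild verification, which I would mention briefly, is that the bimodule axioms (compatibility of left and right actions, distributivity over addition in $K_1$) follow from the coherence of the two monoidal structures on $V(\mathcal{T})$ and from the biexactness of $\picmult$ with respect to $\picadd$.
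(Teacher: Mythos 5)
Your proposal is correct and follows exactly the route the paper takes: the paper's own proof is a one-line citation of Theorem \ref{cat ring for tensor triangulated}, Remark \ref{pi_0 and pi_1 of categorical rings}, and the identification $K_0(\mathcal{T})=\pi_0(V(\mathcal{T}))$, and your argument simply spells out the same deduction in more detail. No gaps; the extra verifications you mention (well-definedness on isomorphism classes, distributivity from biexactness, the actions via ${}_A\sigma$ and $\sigma_A$) are precisely the content already packaged in the cited remark.
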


\begin{proof}
  This is an immediate consequence of Theorems \ref{cat_ring_for_tensor_triangulated}, Remark \ref{pi_0_and_pi_1_of_categorical_rings}, and the fact that $K_0(\mathcal{T})$ is $\pi_0(V(\mathcal{T}))$.
\end{proof}

\bibliographystyle{hamsalpha}
\bibliography{references}
\end{document}